\documentclass{amsart}[12pt]

\usepackage{amscd,amssymb}
\usepackage{comment}
\usepackage{pgf, tikz}

\usepackage{placeins}

\usepackage{color}

\newtheorem{theorem}{Theorem}[section]

\newtheorem{proposition}[theorem]{Proposition}
\newtheorem{corollary}[theorem]{Corollary}

\theoremstyle{definition}

\newcommand{\GL} {\mathrm{GL}}
\newcommand{\SL} {\mathrm{SL}}

\newcommand{\var} {\operatorname{var}}

\def\ZZ {{\mathbb Z}}     

\begin{document}
	
	\title[On the cocharacter sequence of some PI-algebras]
	{On the cocharacter sequence of some PI-algebras}
	
	\author[Elitza Hristova]
	{Elitza Hristova}
	
	\address{Institute of Mathematics and Informatics,
	Bulgarian Academy of Sciences,
	Acad. G. Bonchev Str., Block 8,
	1113 Sofia, Bulgaria}
	\email{e.hristova@math.bas.bg}
	
	\thanks{Partially supported by Grant KP-06 N92/2 of the Bulgarian National Science Fund.}
	
	\subjclass[2020]{16R10; 13A50; 20G05}
	
	\keywords{Lie nilpotent associative algebras, cocharacter sequence, eventual arm width, Hilbert series, algebra of invariants.}
	
	\begin{abstract}
		
		Let $A$ be a unital associative PI-algebra over a field of characteristic zero. We study which partitions $\lambda$ appear with nonzero multiplicities in the cocharacter sequence of $A$ for several classes of algebras $A$. Berele defines the eventual arm width $\omega_0(A)$ to be the maximal integer $d$ so that if $\lambda$ appears with nonzero multiplicity in the cocharacter sequence of $A$, then $\lambda$ can have at most $d$ parts arbitrarily large. Berele also shows that if $A$ is Lie nilpotent, then $\omega_0(A) = 1$. In the first part of this paper, we show that if $A$ is unital, then $\omega_0(A) = 1$ if and only if $A$ is Lie nilpotent. To prove this statement, we show that the algebra of proper polynomials $B_n(A)$ is finite dimensional if and only if $A$ is Lie nilpotent. In the second part, we give a bound on the nonzero multiplicities $\lambda$ in the cocharacter sequence of $A$, when the T-ideal of identities of $A$ is equal to a product of T-ideals generated by long commutators. As an application, we show that for a Lie nilpotent algebra $A$, the nonzero multiplicities $m_{\lambda}(A)$ correspond to partitions $\lambda$ which are supported in step-like diagrams in which the number of steps grows with the index of Lie nilpotency. Finally, we give also some applications to the noncommutative invariant theory of the special linear group $\SL(n)$.
\end{abstract}	
	\maketitle
	\section{Introduction}
	
	In this paper, all algebras are associative and unital and $K$ is a field of characteristic zero. Let $K \left \langle X \right\rangle$ denote the free associative algebra generated by a countably infinite set $X = \{x_1, x_2, \dots\}$ and let $K\left\langle X_m \right\rangle$ denote the free associative algebra generated by the finite subset $X_m = \{x_1, \dots, x_m\} \subset X$. A T-ideal in $K \left \langle X \right\rangle$ is any two-sided associative ideal that is closed under all $K$-algebra endomorphisms of $K \left \langle X \right\rangle$. If $A$ is a PI-algebra, then the set of all polynomial identities of $A$ forms a T-ideal, which we denote by $T(A)$. Let $P_m$ denote the subspace of multilinear polynomials of degree $m$ in $K\left\langle X_m \right\rangle$. 
	Then $P_m$ is naturally a module over the symmetric group $S_m$, where the action of $S_m$ is given by permuting the variables. For any PI-algebra $A$, one also considers the natural quotient map
	\[
	K\left\langle X_m \right\rangle \rightarrow K\left\langle X_m \right\rangle/ (K\left\langle X_m \right\rangle \cap T(A))	
	\]
	and sets $P_m(A)$ to be the image of $P_m$ under the above map.	
	Then $P_m(A)$ inherits the $S_m$-module structure of $P_m$. The character of $P_m(A)$ as an $S_m$-module is denoted by $\chi_m(A)$ and is called the $m$-th cocharacter of the algebra $A$ (or of the T-ideal $T(A)$). One important question in the quantitative theory of PI-algebras is to determine the sequence of cocharacters of a given PI-algebra $A$, i.e. to determine for every $m$ the multiplicities $m_{\lambda}(A)$ in the decomposition
	
	\begin{align} \label{eq_cochar}
		\chi_m(A) = \sum_{\lambda \vdash m} m_{\lambda}(A) \chi_{\lambda},
	\end{align}
	where $\lambda$ is a partition of $m$ and $\chi_{\lambda}$ is the character of the irreducible $S_m$-module corresponding to the partition $\lambda$.
	
	The multiplicities $m_{\lambda}(A)$ from (\ref{eq_cochar}) are explicitly known only for a few classes of PI-algebras $A$. 
	Furthermore, there are general results about the properties of the partitions $\lambda$ such that $m_{\lambda}(A) \neq 0$. In particular, the following classical theorem of Amitsur and Regev (known also as the Hook Theorem) shows that for an arbitrary PI-algebra $A$ we can capture all partitions $\lambda$ for which the multiplicity $m_{\lambda}(A)$ is non-zero in hook Young diagrams. We first define for arbitrary integers $k,l$ the set 
	\[
	H(k,l; m) = \{ \lambda \vdash m : \lambda_{k+1} \leq l\},
	\]
	
	and 
	\[
	H(k,l) = \bigcup_{m \geq 0} H(k,l;m).
	\]
Then, the following theorem holds.
	\begin{theorem}\label{thm_AmitzurRegev} \cite{AR}
		For any PI-algebra $A$ there exists $k$ and $l$ such that all nonzero multiplicities in the cocharacter sequence of $A$ lie in the $k$ by $l$ hook, i.e., if $m_{\lambda}(A) \neq 0$, then $\lambda \in H(k,l)$. 
	\end{theorem}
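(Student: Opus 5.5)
The plan follows the classical Amitsur--Regev argument. It combines Regev's exponential bound on codimensions with a lower bound on the degrees of irreducible characters whose diagrams contain a large rectangle.

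\emph{Step 1: exponential codimension bound.} Since $A$ satisfies some identity of degree $d$, Regev's theorem gives $c_m(A)=\dim P_m(A)\le (d-1)^{2m}$. The underlying fact is that $P_m(A)$ is spanned by the monomials $x_{\sigma(1)}\cdots x_{\sigma(m)}$ with $\sigma$ avoiding a decreasing subsequence of length $d$, i.e.\ \emph{$d$-good} permutations. The number of these is at most $(d-1)^{2m}$, by the Robinson--Schensted correspondence together with a count of standard tableaux with at most $d-1$ rows. Set $\alpha=(d-1)^2$, so that
\[
\sum_{\lambda\vdash m} m_\lambda(A)\,\chi_\lambda(1)=c_m(A)\le \alpha^m .
\]
Consequently, $m_\lambda(A)\neq 0$ forces $\chi_\lambda(1)=f^\lambda\le \alpha^m$.

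\emph{Step 2: large rectangles force large degree.} Suppose $\lambda\vdash m$ with $\lambda_{k+1}>l$. Then the diagram of $\lambda$ contains the $(k+1)\times(l+1)$ rectangle $\mu$, with $n=(k+1)(l+1)$ cells. I would show that $f^\lambda$ grows faster than $\alpha^m$ when $k,l$ are large compared with $\alpha$. The cleanest route is the Littlewood--Richardson/branching estimate $f^\lambda\ge \binom{m}{n}^{?}$-type bounds. Instead, I plan to work inside $P_m(A)$ directly. If $m_\lambda(A)\ne0$, then by restricting to $S_n\times S_{m-n}$ and using branching, the rectangle character $\chi_\mu$ appears in the cocharacter of $A$ in degree $n$. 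More precisely, one uses the fact that the cocharacter sequence is monotone under removing boxes: if $m_\lambda(A)\neq0$ and $\mu\subseteq\lambda$ then $m_\mu(A)\neq 0$. This holds because a highest weight vector for $\lambda$ can be specialized and differentiated, or, more simply, because $P_n(A)$ embeds into $P_m(A)$ via multiplication by $x_{n+1}\cdots x_m$ on the right composed with the $S_m$-action, and the Branching Rule applies. Thus it suffices to rule out rectangles. For the rectangle $\mu$ we have $f^\mu\le c_n(A)\le \alpha^{n}$. The hook length formula, however, gives $f^\mu=n!/\prod h(i,j)$, with each hook at most $k+l+1$, so
\[
f^\mu\ge \frac{n!}{(k+l+1)^n}\ge \Bigl(\frac{n}{e(k+l+1)}\Bigr)^n .
\]
Taking $k=l$ with $(k+1)^2/(e(2k+1))>\alpha$, i.e.\ $k$ of order $e\alpha$, gives $f^\mu>\alpha^n$, a contradiction.

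\emph{Main obstacle.} The delicate point is the monotonicity step, namely passing from $m_\lambda(A)\ne0$ to $m_\mu(A)\ne0$ for the subrectangle $\mu$. Naive restriction to $S_n$ does not obviously produce a submodule of $P_n(A)$. I would handle this through the $\GL$-module picture. Here $m_\lambda(A)\neq0$ iff there is a highest weight vector $w_\lambda\in K\langle X_{k'}\rangle/T(A)$, a product of standard polynomials (alternating sums in columns). Such a vector is a nonzero element of the relatively free algebra, being a linear combination of products of $\lambda_1$ alternating column-factors. If it fails to contain a nonzero rectangle-type piece, I would argue directly instead. Alternatively, I would bypass monotonicity altogether by applying the bound to $\lambda$ itself: a diagram containing the $(k+1)\times(l+1)$ rectangle has $f^\lambda\ge \binom{m}{n} f^\mu$-ish growth. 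Precisely, $f^\lambda\ge f^\mu\cdot f^{\lambda/\mu}$, and then one compares with $\alpha^m$ using $f^{\lambda/\mu}$. I expect that making this comparison uniform in $m$ will be the real work.
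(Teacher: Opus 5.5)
The paper gives no proof of this theorem; it only cites Amitsur--Regev. So I judged your proposal on its own terms. Step~1 and the hook-length estimate are fine. The reduction from $\lambda$ to the rectangle $\mu$, however, is a genuine gap, and the monotonicity principle you want to use there is false, even for unital algebras.

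Take $A=M_2(K)$. The standard polynomial $St_4$ is an identity (Amitsur--Levitzki), so $m_{(1^4)}(A)=0$. The Capelli polynomial $c_4=\sum_{\sigma}\mathrm{sgn}(\sigma)x_{\sigma(1)}y_1x_{\sigma(2)}y_2x_{\sigma(3)}y_3x_{\sigma(4)}$ is not an identity. At $x=(e_{11},e_{12},e_{22},e_{21})$ and $y=(e_{11},e_{22},e_{22})$, only $\sigma=\mathrm{id}$ survives, and it gives $e_{11}$. Since $c_4$ is alternating in $x_1,\dots,x_4$, the branching rule forces every $S_7$-isotypic component of $c_4$ in $P_7$ to have $\lambda\supseteq(1^4)$. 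One of these components is not an identity, so $m_\lambda(A)\neq 0$ for some $\lambda\supseteq(1^4)$.

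Your justification via $f\mapsto fx_{n+1}\cdots x_m$ only shows that constituents of $P_n(A)$ occur in $\mathrm{Res}_{S_n}P_m(A)$. That is the opposite direction from what you need. The fallback of bounding $f^\lambda$ directly cannot work either. For fixed $k,l$ and $\lambda=(m-k(l+1),(l+1)^k)$ you have $f^\lambda\le m^{k(l+1)}$, which is eventually far below $\alpha^m$. So $c_m(A)$ alone never excludes such $\lambda$.

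The missing idea is this: restricting to $S_n$ lands not in $P_n$, but in $P_n$ with the other variables inserted between its letters. That larger space still has exponentially bounded codimension. The argument runs as follows.

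1. Let $M\subseteq P_m$ with $M\cong S^\lambda$ and $M\cap T(A)=0$.
2. As an $S_n$-module (renaming $x_1,\dots,x_n$), $P_m$ is a direct sum of regular modules, one for each placement of $x_{n+1},\dots,x_m$.
3. Each summand is the image, under a substitution $z_j\mapsto(\text{monomial})$ that is bijective on monomials and $S_n$-equivariant, of $Q_S=\mathrm{span}\{z_0x_{\sigma(1)}z_1\cdots x_{\sigma(n)}z_n:\sigma\in S_n\}$. Here only the $z_j$ with $j\in S$ actually occur.
4. $Q_S$ consists of multilinear polynomials of degree $n+|S|\le 2n+1$. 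Hence its image modulo $T(A)$ has dimension at most $c_{n+|S|}(A)\le \alpha^{2n+1}$.
5. Suppose $f^\mu>\alpha^{2n+1}$; your hook-length bound gives this for the square with $k=l>2e\alpha^{3}$. Then the $\mu$-isotypic part of each $Q_S$ lies in $T(A)$. By substitution, so does the $\mu$-isotypic component of $P_m$ as an $S_n$-module.
6. By the branching rule, $\mathrm{Res}_{S_n}M$ contains $S^\mu$. Hence $M\cap T(A)\neq 0$, a contradiction.

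This replaces your monotonicity step. It relies on the strong hypothesis $f^\mu>\alpha^{2n+1}$, which controls $\mu$ with arbitrary insertions. The weaker condition $m_\mu(A)=0$ does not propagate to larger partitions, as $M_2(K)$ shows.
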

	
	Pictorially, Theorem \ref{thm_AmitzurRegev} implies that all nonzero multiplicities $m_{\lambda}(A)$ correspond to partitions $\lambda$ that are supported in hook diagrams as in Figure \ref{fig_AmitzurRegev}. One usually calls the first $k$ rows of the Young diagram of the partition $\lambda$ the arm of the partition and the remaining part -- the leg of the partition.
	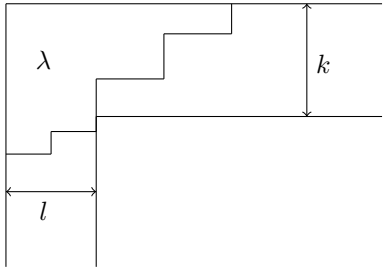
\begin{figure} [!htb] \label{fig_AmitzurRegev}
		\centering
		\begin{tikzpicture} 
			\draw (0,0.5)--(5,0.5);
			\draw (1.2, -1) -- (5, -1);

			\draw (0, 0.5)--(0,-3);
			\draw (1.2,-1)--(1.2,-3);
			\draw [<->] (0,-2)--(1.2,-2);
			\draw (0.5,-2.5) node[above] {$l$};
			
			\draw [<->] (4,0.5)--(4,-1);
			\draw (4,-0.3) node[right] {{$k$}};
			
			\draw (3, 0.5) -- (3, 0.1);
			\draw (2.1, 0.1) -- (3, 0.1);
			\draw (2.1, 0.1) -- (2.1, -0.5);
			\draw (1.2, -0.5) -- (2.1, -0.5);
			\draw (1.2, -0.5) -- (1.2, -1.2);
			\draw (0.6, -1.2) -- (1.2, -1.2);
			\draw (0.6, -1.2) -- (0.6, -1.5);
			\draw (0, -1.5) -- (0.6, -1.5);
			
			\draw (0.5, 0) node[below] {{$\lambda$}};

		\end{tikzpicture}
		
		\caption{Hook diagrams}
	\end{figure}
	\FloatBarrier
	
	In \cite{Be}, Berele defines for any PI-algebra $A$ the eventual arm width $\omega_0(A)$ in the following way: $\omega_0(A)$ = the maximum integer $d$ such that all partitions $\lambda$ which appear with non-zero multiplicity in the cocharacter sequence of $A$ can have at most $d$ parts arbitrarily large. The definition of the eventual leg width $\omega_1(A)$ is similar, using the conjugate partition $\lambda'$. Namely, $\omega_1(A)$ = the maximum integer $h$ such that there exists $\lambda$ in the cocharacter sequence of $A$ with $m_{\lambda}(A) \neq 0$ and such that $\lambda'$ can have $h$ arbitrarily large parts. In other words, $\omega_0(A)$ and $\omega_1(A)$ are minimal such that all nonzero multiplicities correspond to partitions that are supported in diagrams as in Figure 2 
	for some $s_1(A)$ and $s_2(A)$. Moreover, Berele shows that for any $A$, $\omega_0(A) \geq \omega_1(A)$.
	
	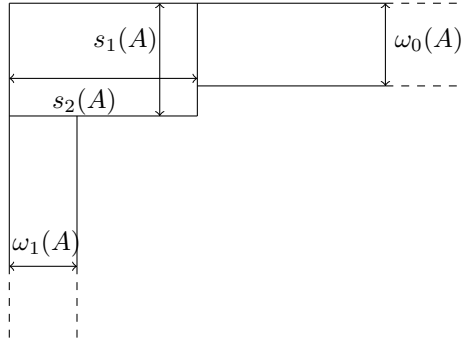
\begin{figure}[!htb] \label{fig_Berele}
		\centering
		\begin{tikzpicture}
			\draw (0,0.5)--(5,0.5);
			\draw [dashed] (5,0.5) -- (6,0.5);
			\draw (2.5, -0.6) -- (5, -0.6);
			\draw [dashed] (5, -0.6) -- (6, -0.6);
			
			\draw [<->] (5,0.5)--(5,-0.6);
			\draw (5,0) node[right] {{$\omega_0(A)$}};

			\draw (2.5, 0.5) -- (2.5, -1);
			\draw (0, -1) -- (2.5, -1);
			
			\draw[<->] (2, 0.5) -- (2, -1);
			\draw (2.1,0) node[left] {{$s_1(A)$}};
			
			\draw[<->] (0, -0.5) -- (2.5, -0.5);
			\draw (1,-0.5) node[below] {{$s_2(A)$}};
			
			\draw (0, 0.5)--(0,-3);
			\draw (0.9,-1)--(0.9,-3);
			\draw [dashed] (0,-3) -- (0, -4);
			\draw [dashed] (0.9, -3) -- (0.9, -4);
			
			\draw [<->] (0,-3)--(0.9,-3);
			\draw (0.5,-3) node[above] {{$\omega_1(A)$}};

		\end{tikzpicture}
		
		\caption{Definitions of $\omega_0(A)$, $\omega_1(A)$, $s_1(A)$, and $s_2(A)$}
	\end{figure}
	\FloatBarrier
	
	Figure 2 shows that $s_2(A)$ is the minimum integer $r_2$ such that for all partitions $\lambda$ with $m_{\lambda}(A) \neq 0$ it holds that $\lambda_{\omega_0(A) +1} \leq r_2$. The integer $s_1(A)$ is defined similarly using the conjugate partition $\lambda'$ and $\omega_1(A)$.
	
	In the same paper \cite{Be}, Berele computes the values of $\omega_i(A)$ ($i=0,1$) for various algebras $A$. Below, we give some cases which are of relevance to our paper. 
	First we define inductively left-normed long commutators in $K\left\langle X \right\rangle$ in the usual way: $[u_1, u_2] = u_1u_2- u_2u_1$
	and for $i \geq 3$, we have $[u_1, \dots, u_i] = [[u_1, \dots, u_{i-1}], u_i]$, where $u_1, \dots, u_i$ are arbitrary elements from $K\left\langle X \right\rangle$. The commutator $[u_1, \dots, u_i]$ is called a commutator of length $i$ or simply an $i$-commutator.
	Let $I_p$ denote the two-sided associative ideal in $K\left\langle X \right\rangle$ generated by all commutators of length $p$. Equivalently, $I_p$ is the $T$-ideal generated by the polynomial $[x_1, \dots, x_p] = 0$. Then, the following theorem holds.
	
	\begin{theorem} \cite{Be} \label{thm_Berele}
		Let $I$ be a T-ideal and let 
		$T(A) = I$.
		Let $k$ be a positive integer written as $k = 3q + r$ where $r= 0,1,2$. If the ideal $I$ is generated by the $k$-th power of a double commutator, then $\omega_0(A) = k$ and $\omega_1(A) = 2q + \lfloor r/2 \rfloor$. If $I$ is generated by the $k$-th power of a commutator of length $p+1$ for $p \geq 2$, then $\omega_0(A) = \omega_1(A) = k$.
	\end{theorem}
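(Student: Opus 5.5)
This is Berele's theorem, quoted from \cite{Be}; the plan below is how I would reconstruct it. The approach is to compute $\omega_0$ and $\omega_1$ from two ingredients. The upper bounds come from the Hilbert series (equivalently, from the $\GL_m$-module structure of the relatively free algebras $F_m(A)=K\langle X_m\rangle/(K\langle X_m\rangle\cap T(A))$). The lower bounds come from explicit highest weight vectors that are not identities. I use the standard dictionary: $m_\lambda(A)\neq 0$ with $\ell(\lambda)\le m$ if and only if $F_m(A)$ contains a highest weight vector of weight $\lambda$. Hence $\omega_0(A)$ is the largest $d$ such that highest weight vectors can have $d$ unbounded exponents. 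Dually, $\omega_1(A)$ is read off from the $\GL$-action on the "odd" (Grassmann) variables, via Berele's supertrace/superalgebra formulation, which swaps rows and columns.

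For the upper bounds, I would analyze $A$ through a generic-type model. Let $I$ be generated by $c^k$, where $c$ is a commutator of length $p+1$. Modulo $I$, any product containing $k$ commutator factors of that length, after reordering by the $T$-ideal property, lies in $I$ up to lower terms. So $F_m(A)$ is spanned by elements $f_0\,c_1 f_1 \cdots c_j f_j$ with $j<k$, where the $f_i$ are polynomials in the variables (commutative modulo commutators). In each of the at most $k$ "commutative blocks" between commutator insertions, a $\GL$-highest weight vector contributes one unbounded row: products of a fixed variable $x_1$, where only symmetric tensors can grow. This gives $\omega_0\le k$. The same count on the Grassmann side gives $\omega_1\le k$ whenever $p\ge 2$.

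For the double commutator case I would refine the count with the identity that $[x,y]^2$ behaves like a "Grassmann-type" square. In characteristic zero, modulo higher consequences, $[x,y][x,z]$ is nearly skew. So three consecutive commutator factors can contribute only two long columns on the odd side. Grouping the $k$ factors into $q$ triples plus $r$ leftovers then yields $\omega_1\le 2q+\lfloor r/2\rfloor$.

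For the lower bounds, I would exhibit, for each $N$, a highest weight vector that is not in $I$ and has the required number of rows (respectively columns) of length at least $N$. Concretely, I would take products $x_1^{N}[x_1,x_2]x_2^{N}[x_2,x_3]\cdots$ and check non-membership by evaluating in a suitable algebra satisfying $c^k=0$, namely block upper triangular matrices over a Grassmann algebra. This is the step I expect to be the main obstacle. The hard part is the precise $\omega_1$ value $2q+\lfloor r/2\rfloor$ for the double commutator: both the sharp upper bound through the triple grouping and a witness algebra realizing it require delicate superalgebra computations. I would try a $k$-fold tensor-type construction of upper triangular matrices with entries in $E$ (the Grassmann algebra) and count odd degrees.
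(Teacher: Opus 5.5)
The paper does not prove this statement; it is quoted from \cite{Be}. Judged on its own terms, your sketch has a genuine gap. The upper bounds rest on a false reduction, and for $p=1$ your proposed witness does not reach the stated value of $\omega_1$.

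\emph{Upper bounds.} You treat the $T$-ideal generated by the single polynomial $c^k$ as if it were the product $I_{p+1}^k$. Only the inclusion $\langle c^k\rangle^T\subseteq I_{p+1}^k$ holds, and it is strict for $p=1$. So $\var(I)$ is the \emph{larger} variety, and bounds for $I_{p+1}^k$ do not transfer to it. Linearizing $c^k$ spreads the copies of each variable over all $k$ factors and gives only symmetrized relations, not the vanishing of individual products $c_1\cdots c_k$. For $p=1$, $k=2$ it gives exactly $\sum_{\sigma,\tau\in S_2}[x_{\sigma(1)},y_{\tau(1)}][x_{\sigma(2)},y_{\tau(2)}]\in I$.

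Your framework in fact contradicts the statement. For $p=1$, a spanning set $f_0c_1f_1\cdots c_jf_j$ with $j<k$ and commutative blocks $f_i$ makes $F_m(A)$ a $\GL(m)$-quotient of $\bigoplus_{j<k}S(V)\otimes(\Lambda^2V\otimes S(V))^{\otimes j}$. All constituents of that module lie in a strip of bounded height, which would force $\omega_1(A)=0$. But $E$ satisfies $[x,y]^2$: writing $a_1,b_1$ for the odd parts, $[a,b]=2a_1b_1$ and $(a_1b_1)^2=-a_1^2b_1^2=0$. Hence $\omega_1(A)\geq\omega_1(E)=1$ for every $k\geq 2$. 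Concretely, $[x_1,x_2][x_3,x_4]\notin T(E)$, and since commutators are central in $E$, no reordering can reduce it.

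So the triple-grouping step is not a refinement of a valid bound, and the remark that $[x,y][x,z]$ is ``nearly skew'' gives no mechanism limiting long columns. What is needed is structural information on $\var(I)$. One route is Kemer's representation $I=T(G(B))$ with $B$ a finite-dimensional superalgebra, together with an analysis of which simple components and nonzero chains $C_1JC_2\cdots JC_t$ are compatible with $[x,y]^k$. Heuristically, this is where the formula comes from:
\begin{itemize}
\item only components with Grassmann envelopes $K$, $E$, $M_{1,1}(E)$ satisfy a power of $[x,y]$;
\item on these, $[x,y]$ has nilpotency index $1,2,3$;
\item they contribute $(1,0),(1,1),(2,2)$ to $(\omega_0,\omega_1)$.
\end{itemize}
Maximizing under total index at most $k$ gives exactly $k$ and $2q+\lfloor r/2\rfloor$. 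Nothing of this kind appears in your sketch.

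\emph{Lower bounds.} The words $x_1^N[x_1,x_2]x_2^N\cdots$ are not highest weight vectors. That is repairable: it suffices to find algebras $A'$ with $I\subseteq T(A')$ and known $\omega_i(A')$. Your kind of witness works for $\omega_0$, and for both invariants when $p\geq 2$. Indeed $[x,y]^k\in I_2^k=T(U_k(K))$ and $[x_1,\dots,x_{p+1}]^k\in I_3^k=T(U_k(E))$, and Proposition \ref{prop_multProduct} gives the value $k$.

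For $\omega_1$ with $p=1$ it fails. $U_j(E)$ satisfies $[x,y]^{2j}$ and has $\omega_1=j$, so triangular matrices over $E$ give only $\lfloor k/2\rfloor$, e.g.\ $1$ instead of $2$ for $k=3$. The missing ingredient is $M_{1,1}(E)$, which is PI-equivalent to $E\otimes E$ and has $\omega_0=\omega_1=2$. There $[x,y]=\left(\begin{smallmatrix}\alpha&\beta\\ \gamma&\alpha\end{smallmatrix}\right)$ with $\beta,\gamma$ odd and $\alpha=u-v$, where $u,v$ are commuting even elements with $u^2=v^2=0$. A direct computation gives $[x,y]^3=0$. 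Now $[x,y]^k=([x,y]^3)^q[x,y]^r\in T(M_{1,1}(E))^q\,T(C_r)$, with $C_2=E$, $C_1=K$, and the last factor omitted if $r=0$. Proposition \ref{prop_multProduct} then yields $\omega_1\geq 2q+\lfloor r/2\rfloor$.

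Thus the lower bounds can be completed with tools from this paper. The upper bounds, which carry the content of Berele's theorem, are not established by your argument.
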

	
	In particular, Theorem \ref{thm_Berele} implies that if $A$ is a PI-algebra with $T(A) = I_{p+1}$ for some integer $p \geq 2$, then $\omega_0(A) = \omega_1(A) = 1$.
	
	Furthermore, in the paper \cite{H}, we show that if $T(A) = I_{p+1}$ then $s_2(A) = p-1$. Hence, if $m_{\lambda}(A) \neq 0$ then $\lambda$ belongs to the hook $H(1, p-1)$.
	
	It was independently shown by Berele and Drensky that the question of finding the multiplicities $m_{\lambda}(A)$ in the cocharacter sequence (\ref{eq_cochar}) has an equivalent formulation in the language of the representation theory of the group $\GL(n)$. Namely, we have the following. Let $V$ be an $n$-dimensional $K$-vector space with basis given by the set $X_n = \{x_1, \dots, x_n\}$. Then the tensor algebra $T(V)$ is identified in a natural way with the free associative algebra $K\left \langle X_n \right\rangle$. The group $\GL(n):=\mathrm{GL}(V,K)$ acts on the vector space $V$ with basis $X_n$ and this action is extended to the diagonal action of $\GL(n)$ on $K\left \langle X_n \right\rangle$. Let $I$ be a T-ideal in $K\left \langle X \right\rangle$ and let $\mathfrak{V}_I = \var(I)$ be the variety of algebras satisfying the identities from $I$. The quotient algebra $F_n(\mathfrak{V}_I) = K \left\langle X_n \right\rangle / (K \left\langle X_n \right\rangle \cap I)$ is called the relatively free algebra of rank $n$ in the variety $\mathfrak{V}_I$. 
	$F_n(\mathfrak{V}_I)$ inherits the $\GL(n)$-module structure of $K \left\langle X_n \right\rangle$. In this setting, we are interested in the decomposition of $F_n(\mathfrak{V}_I)$ as a $\GL(n)$-module for different classes of $T$-ideals $I$. In other words, if $\lambda = (\lambda_1 \geq \lambda_2 \geq \dots \geq \lambda_n \geq 0)$ is a non-negative integer partition with at most $n$ parts and if $V_{\lambda}$ denotes the irreducible $\GL(n)$-module with highest weight $\lambda$ we would like to find the multiplicities $m_{\lambda}(I)$ in the following expression:
	\begin{align}\label{eq_decomp_GL}
		F_n(\mathfrak{V}_I) \cong_{\GL(n)} \bigoplus_{\lambda = (\lambda_1, \dots, \lambda_n)} m_{\lambda}(I) V_{\lambda}.
	\end{align}
	
	It follows from the results of Berele and Drensky (see, e.g., \cite{Be2}, \cite{D}) that if $\lambda$ is a partition of $m$ with at most $n$ parts and if $I = T(A)$, then the multiplicity $m_{\lambda}(I)$ from Equation (\ref{eq_decomp_GL}) equals the multiplicity $m_{\lambda}(A)$ from Equation (\ref{eq_cochar}).
	
	In this paper, using the language of $\GL(n)$-modules, we compute the integers $\omega_0(A)$, $\omega_1(A)$ and $s_2(A)$ for several classes of PI-algebras $A$. More precisely, in Corollary \ref{coro_w0} we show that for any associative unital PI-algebra $A$, we have that $\omega_0(A) = 1$ if and only if $T(A) \supseteq I_{p}$ for some positive integer $p$.
	Furthermore, in Corollary \ref{coro_multProduct} we show that if $I = I_{p_1+1} \cdots I_{p_k+1}$ and $T(A) = I$, then $\omega_0(A) = \omega_1(A) =  k$ and $s_2(A) = p_1 + \dots + p_k -1$. In addition, in Proposition \ref{prop_bound_lambda} we show that if again $I = I_{p_1+1} \cdots I_{p_k+1}$ and  $m_{\lambda}(I) \neq 0$, then $\lambda_{2k} \leq p_1 + \dots + p_k - k$. 
	
Using these results, in Section \ref{sec_app}, we obtain a refined condition on the non-zero multiplicities $m_{\lambda}(I_{p+1})$ for any $p \geq 1$. This refined condition is stated in Proposition \ref{prop_bound_Ip}. It says that if $m_{\lambda}(I_{p+1}) \neq 0$, then, for each $1 \leq k \leq \lfloor \frac{n}{2} \rfloor $ such that $p > 2(k-1)$ it holds that $\lambda_{2k} \leq p-k$. In particular, this means that the partitions $\lambda$ with $m_{\lambda}(I_{p+1}) \neq 0$ are contained in a step-like Young diagram and the number of steps in this Young diagram grows with $p$. When $p = 1,2$ we have only the condition $\lambda_2 \leq p-1$, i.e., the Young diagram which captures all non-zero multiplicities has the form of a hook. If $p=3,4$, we have two conditions $\lambda_2 \leq p-1$ and $\lambda_4 \leq p-2$, so we already have a step-like Young diagram. For $p = 5,6$ we have three conditions on $\lambda$, namely $\lambda_2 \leq p-1$, $\lambda_4 \leq p-2$, and $\lambda_6 \leq p-3$ and so on. We also give some restrictions on the conjugate partition $\lambda'$.
	
In Section \ref{sec_invariantTheory}, we mention another application related to the algebra of $\SL(n): = \SL(V, K)$-invariants in $F_n(\mathfrak{V}_I)$, for any $T$-ideal $I$. We give a criterium for finite-dimensionality of the algebra $(F_n(\mathfrak{V}_I))^{\SL(n)}$ in terms of the numbers $\omega_0$, $\omega_1$, $s_1$ and $s_2$. 
	
	\section{On the eventual arm width of PI-algebras}
	In this section, using the language of representation theory of $\GL(n)$, we discuss properties of the eventual arm width of unital associative PI-algebras. We start by introducing some notations. 
	We say that a commutator $[x_{i_1}, \dots, x_{i_r}] \in K\langle X \rangle$ is pure if all the entries in the commutator are elements of the set $X$. A polynomial in $K\langle X \rangle$ is called proper if it is a linear combination of products of pure commutators. We denote by $B_n$ the subalgebra of $K\langle X_n \rangle$ of proper polynomials. Then $B_n$ is a $\GL(n)$-submodule of $K\langle X_n \rangle$. Let $I$ be a T-ideal in $K\langle X \rangle$ and let as before $\mathfrak{V}_I$ denote the variety of associative algebras satisfying the identities from $I$. We consider the natural quotient map
	\[
	K\left \langle X_n \right\rangle \rightarrow F_n(\mathfrak{V}_I) = K \left\langle X_n \right\rangle / (K \left\langle X_n \right\rangle \cap I).
	\]
	By $B_n(\mathfrak{V}_I)$ we denote the image of $B_n$ under the above map. If $I = T(A)$ for some PI-algebra $A$, we use also the notation $B_n(A) = B_n(\mathfrak{V}_I)$. By a theorem of Drensky (\cite{D}), we have the following decomposition of $F_n(\mathfrak{V}_I)$ as a $\GL(n)$-module:
	\begin{align} \label{eq_Drensky}
		F_n(\mathfrak{V}_I) \cong_{\GL(n)} B_n(\mathfrak{V}_I) \otimes S(V),
	\end{align}
	where again $V$ is the vector space spanned by the elements $x_1, \dots, x_n$ and $S(V)$ denotes the symmetric algebra of $V$. From Drensky's theorem it follows that $\omega_0(A)\geq 1$ for any associative unital PI-algebra $A$.

	We recall that a unital associative algebra $A$ is called Lie nilpotent of index at most $p$ if $A$ satisfies the identity $[x_1, \dots, x_{p+1}] = 0$, or in other words, if $T(A) \supseteq I_{p+1}$.
	
	Let us denote by $\mathfrak{N}_p$ the variety of all Lie nilpotent associative algebras of index at most $p$. In the above notations, $\mathfrak{N}_p = \mathfrak{V}_{I_{p+1}} = \var(I_{p+1})$, is the variety of all associative algebras satisfying the identities from $I_{p+1}$.

	The following statement now holds.

	\begin{theorem}\label{prop_Bn_finite}
		For any T-ideal $I$, the algebra $B_n(\mathfrak{V}_I)$ is a finite-dimensional subalgebra of $F_n(\mathfrak{V}_I)$ if and only if $F_n(\mathfrak{V}_I)$ satisfies the polynomial identity $[x_1, \dots, x_p]$ for some $p$, i.e., if and only if $I_p \subseteq I$ for some $p$.
	\end{theorem}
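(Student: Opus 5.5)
We prove the two implications separately. Throughout, $n$ is fixed and $n\ge 2$; for $n=1$ every commutator vanishes in $K\langle x_1\rangle$ and both conditions hold trivially. We use that $B_n(\mathfrak{V}_I)$ is graded by degree and spanned by images of products of pure commutators in $x_1,\dots,x_n$.

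\emph{Sufficiency.} Suppose $I_p\subseteq I$. Then every pure commutator of length $\ge p$ vanishes in $F_n(\mathfrak{V}_I)$. The commutator lengths in a product of pure commutators are therefore bounded by $p-1$, so it suffices to bound the number of factors. For this we use the classical results on Lie nilpotent algebras. Latyshev's theorem (and the more precise Volichenko/Gordienko-type estimates) gives
\[
I_{p}\supseteq \ \text{a product of boundedly many ideals } I_2=([x_1,x_2]).
\]
More directly, the standard fact that $\mathfrak{N}_{p-1}$ satisfies $[x_1,x_2]^{N}\cdots$ shows that every product of sufficiently many commutators vanishes in the relatively free algebra of finite rank. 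Concretely, one shows that the ideal $C$ of $F_n(\mathfrak{V}_I)$ generated by all commutators is nilpotent. By the Jennings/Latyshev theorem, a finitely generated Lie nilpotent associative algebra has nilpotent commutator ideal. Since $B_n(\mathfrak{V}_I)$ in positive degree lies in $C$ (every proper monomial of positive degree contains a commutator factor), products of more than $N$ commutators vanish. There are then finitely many proper monomials in $n$ letters with at most $N$ factors, each of length $<p$, so $\dim B_n(\mathfrak{V}_I)<\infty$. I will cite the nilpotency of the commutator ideal of a finitely generated Lie nilpotent algebra as the key input; it also follows from Latyshev's result that $T$-ideals containing $I_p$ contain $I_2^{m}$ modulo identities that are zero in finite rank.

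\emph{Necessity.} Suppose $B_n(\mathfrak{V}_I)$ is finite dimensional. Since it is graded, all its components of degree $\ge d$ vanish for some $d$. Consider the pure commutator $c_p=[x_2,x_1,x_1,\dots,x_1]$ of length $p$. It is proper of degree $p$, hence $c_{d}=0$ in $F_n(\mathfrak{V}_I)$, i.e. $[x_2,x_1,\dots,x_1]$ with $d-1$ copies of $x_1$ lies in $I$, so $I$ contains an Engel-type identity. The main obstacle is to upgrade this to the full identity $[x_1,\dots,x_d]\in I$. Here I would use that finite dimensionality applies to \emph{all} proper elements of degree $\ge d$, not only Engel ones: every multilinear pure commutator $[x_{i_1},\dots,x_{i_d}]$ with $d\le n$ is proper of degree $d$ and hence zero. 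To avoid the restriction $d\le n$ I would use linearization plus the $\GL(n)$-structure. The identities of $F_n(\mathfrak{V}_I)$ are the polynomial identities in the $T$-ideal, and $T$-ideals in characteristic zero are generated by multilinear elements. Moreover, an element $f(x_1,\dots,x_m)$ with $m>n$ lies in $I$ iff all its substitutions by elements of $F_n$ vanish only when $I$ is determined by rank $n$. To sidestep this, I argue that the statement should be read with $n$ large ($n\ge d$), or I use the Engel identity $[y,x,\dots,x]=0$ of degree $d$. By full linearization in characteristic zero this gives $\sum_{\sigma}[y,x_{\sigma(1)},\dots,x_{\sigma(d-1)}]\in I$. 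Then I invoke the associative analogue of Zelmanov/Kemer: an associative algebra satisfying an Engel identity in characteristic zero is Lie nilpotent (Kemer's theorem for associative algebras). This yields $I_p\subseteq I$ for some $p$, completing the proof.
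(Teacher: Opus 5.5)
Your proof is correct for $n\ge 2$, but it takes a different route from the paper in both directions.

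\textbf{Sufficiency.} The paper quotes its earlier result that $B_n(\mathfrak{N}_p)$ is finite dimensional and passes to quotients. You instead derive finite dimensionality from Jennings' theorem (the commutator ideal of a finitely generated Lie nilpotent algebra is nilpotent), combined with the vanishing of pure commutators of length $\ge p$. This is self-contained and shows exactly where the finiteness comes from.

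\textbf{Necessity.} The paper uses the dichotomy from its reference: either $F_n(\mathfrak{V}_I)$ satisfies some $[x_1,\dots,x_p]$, or $I\subseteq I_2I_2=T(U_2(K))$. It then appeals to the known infinite dimensionality of the proper part of the relatively free algebra of $U_2(K)$. You argue directly: finite dimensionality kills the proper element $[x_2,x_1,\dots,x_1]$ of large degree, so $I$ contains an Engel polynomial. Zelmanov's theorem, applied to the adjoint Lie algebra of $K\langle X\rangle/I$, then gives $I_p\subseteq I$.

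The two arguments share a core. The Engel commutators are exactly what witness infinite dimensionality for $U_2(K)$. The dichotomy contains ``Engel implies Lie nilpotent'' as a special case, since $U_2(K)$ satisfies no Engel identity. Your version isolates this deep input explicitly, avoids $U_2(K)$, and reaches the global conclusion $I_p\subseteq I$ directly rather than through the rank-$n$ statement. The paper's version is shorter, given its citations.

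\textbf{Points to clean up.}

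\emph{False side claim.} For $p\ge 3$, $I_p$ does not contain a product of boundedly many copies of $I_2$. The Grassmann algebra satisfies $[x_1,x_2,x_3]$ but not $[x_1,x_2]\cdots[x_{2m-1},x_{2m}]$ for any $m$. Only the finite-rank inclusion $I_2^m\cap K\langle X_n\rangle\subseteq I_p$ holds, and that is Jennings' theorem again. Drop this remark.

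\emph{Unnecessary detour.} The discussion of multilinear commutators, linearization and ``reading the statement with $n$ large'' can go. The polynomial $[x_2,x_1,\dots,x_1]$ involves only two variables, so it lies in $I\cap K\langle X_n\rangle\subseteq I$ as soon as $n\ge 2$, and no rank issue arises.

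\emph{The case $n=1$.} This case is not trivial for the statement in the form $I_p\subseteq I$; that form is actually false there. We have $B_1(\mathfrak{V}_I)=K$ for every proper $T$-ideal, for example $I=I_2I_2$, and $I_2I_2$ contains no $I_p$. So the hypothesis $n\ge 2$ is genuinely needed, as it is tacitly in the paper's proof.
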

	
	\begin{proof}
		It follows from Theorem 3.3 from \cite{H} that $B_n(\mathfrak{N}_{p})$ is a finite-dimensional algebra for every $p$. Hence, for every T-ideal $I$ with $I_p \subseteq I$ we have that $B_n(\mathfrak{V}_I)$ is also a finite-dimensional algebra. Thus, the ``if'' direction of the statement is clear. To show the ``only if'' direction, we use an argument that was also used in the proof of Theorem 3.1 from \cite{DD}. For any $T$-ideal $I$ of $K\left \langle X \right\rangle$ either $F_n(\mathfrak{V}_I)$ satisfies the polynomial identity $[x_1, \dots, x_p]$ for some $p$ or $I$ is contained in $I_2I_2$. 
		Let us consider the case $I \subseteq I_2I_2$. Then we have a natural surjective homomorphism from $K\left \langle X_n \right\rangle/ (I \cap K\left \langle X_n \right\rangle)$ to $K\left \langle X_n \right\rangle/ (I_2I_2 \cap K\left \langle X_n \right\rangle)$. The image of $B_n(\mathfrak{V}_I)$ under this homomorphism is the subalgebra of proper polynomials in $K\left \langle X_n \right\rangle/ (I_2I_2 \cap K\left \langle X_n \right\rangle)$, which is known to be infinite-dimensional (see, e.g. \cite{MRZ}). Hence $B_n(\mathfrak{V}_I)$ is also infinite-dimensional.
	\end{proof}

	The above statement leads to the following corollary.
	\begin{corollary} \label{coro_w0}
		Let $A$ be a unital associative PI-algebra. Then, $\omega_0(A) = 1$ if and only if $I_{p} \subseteq T(A)$ for some positive integer $p$, i.e., if and only if $A$ is Lie nilpotent.
	\end{corollary}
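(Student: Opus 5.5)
We derive the corollary from Theorem \ref{prop_Bn_finite} and Drensky's decomposition (\ref{eq_Drensky}), $F_n(\mathfrak{V}_I) \cong B_n(\mathfrak{V}_I)\otimes S(V)$ with $I = T(A)$, working in the $\GL(n)$-language. There, $m_\lambda(A)$ is the multiplicity of $V_\lambda$ in $F_n(\mathfrak{V}_I)$ for $n \geq$ the number of parts of $\lambda$. Recall that $S(V) = \bigoplus_{m\ge 0} V_{(m)}$, and that tensoring with $V_{(m)}$ is governed by Pieri's rule: it adds $m$ boxes to a diagram, no two in the same column.

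For the ``if'' direction, suppose $I_p \subseteq T(A)$. Then $B_n(A)$ is finite-dimensional for every $n$ by Theorem \ref{prop_Bn_finite}. This alone is not quite enough, since we need a bound on $\lambda_2$ that is uniform in $n$. Two routes are available. The first is to quote Theorem \ref{thm_Berele} (the case $k=1$), which gives $\omega_0 = 1$ for $T(A)=I_{p}$; any $A$ with $T(A)\supseteq I_p$ has cocharacter multiplicities bounded by those of $I_p$, so $\omega_0(A)\le 1$. The second, more self-contained route uses the degree bound on $B_n(\mathfrak{N}_{p-1})$ from \cite{H}: proper polynomials there vanish in degree larger than some $N$ independent of $n$. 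Every irreducible component $V_\mu$ of $B_n(A)$ then has $|\mu|\le N$. By Pieri's rule, each $V_\lambda \subseteq V_\mu\otimes V_{(m)}$ satisfies $\lambda_2 \le \mu_1 \le N$. Combined with $\omega_0(A)\ge 1$, which follows from Drensky's theorem, this gives $\omega_0(A)=1$.

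For the ``only if'' direction we argue by contraposition. If $A$ is not Lie nilpotent, Theorem \ref{prop_Bn_finite} (for, say, $n=2$) shows that $B_2(A)$ is infinite-dimensional. Since $B_2(A)$ is graded with finite-dimensional homogeneous components, it contains irreducible $\GL(2)$-submodules $V_{(\mu_1,\mu_2)}$ of unbounded degree. Proper polynomials have no component with $\mu_2$ small relative to degree. Concretely, $B_2$ is spanned by products of commutators, and the highest weight vectors of $B_2$ lie in degrees where $\mu_2$ grows: the proper part in two variables has Hilbert series $\sum \dim B_2^{(\mu)}$ with $\mu_1-\mu_2$ bounded by the number of commutator factors. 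This is the step I expect to be the main obstacle. The cleanest version is to note that any weight $(a,b)$ occurring in $B_2$ has $b\ge$ (number of commutators) $\ge a-b$, because each pure commutator contains at least one $x_2$ after symmetrization. Alternatively, one can use $\SL(2)$-invariance reasoning: $\mathfrak{sl}_2$-lowering shows that unboundedly large modules with bounded $\mu_2$ would force bounded $\mu_1-\mu_2$, hence bounded degree, a contradiction.

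Granting that $\mu_2$ is unbounded among the components $V_\mu$ of $B_2(A)$, Pieri's rule applied with $V_{(m)}$ for $m\ge 0$ produces $V_\mu\otimes V_{(0)}=V_\mu$ itself in $F_2(A)$. Taking $m$ large also yields $\lambda=(\mu_1+m,\mu_2)$. In either case $\lambda_2$ is unbounded, so two parts are arbitrarily large and $\omega_0(A)\ge 2$. Hence $\omega_0(A)=1$ forces $A$ to be Lie nilpotent.
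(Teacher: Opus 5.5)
\textbf{The gap.} Your ``only if'' argument fails at the step you flagged. The claim that $\mu_2$ is unbounded among the irreducible components $V_\mu$ of $B_2(A)$ is not just unproved; it is false. Take $A=U_2(K)$, which is not Lie nilpotent and has $T(A)=I_2I_2$. Modulo $I_2I_2$, a product of two commutators vanishes, and the entries of $[x_{i_1},\dots,x_{i_k}]$ after the second one can be permuted. Hence $B_2(A)\cong V_{(0)}\oplus\bigoplus_{k\ge 2}V_{(k-1,1)}$, and every component has $\mu_2\le 1$.

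Your proposed inequality already fails in the free algebra. The element $[x_2,x_1,\dots,x_1]$ with $k$ copies of $x_1$ is a single commutator and a highest weight vector of weight $(k,1)$. The $\mathfrak{sl}_2$ remark gives no contradiction either: the modules $V_{(k-1,1)}$ have bounded $\mu_2$ and unbounded degree. With $\mu_2$ bounded, your two Pieri choices $\lambda=\mu$ and $\lambda=(\mu_1+m,\mu_2)$ give only bounded $\lambda_2$, so you never reach $\omega_0(A)\ge 2$.

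\textbf{The missing idea.} Pieri's rule lets you put the new boxes into the \emph{second} row. For $m\ge \mu_1-\mu_2$, the module $V_\mu\otimes V_{(m)}$ contains $V_\lambda$ with $\lambda=(\mu_2+m,\mu_1,\mu_3,\dots)$, so $\lambda_2=\mu_1$. You therefore only need $\mu_1$ to be unbounded among the components of $B_n(A)$. That is immediate from infinite dimensionality for fixed $n$, since the graded pieces are finite dimensional and $|\mu|\le n\mu_1$. No information about $\mu_2$ is needed, and this is exactly the paper's proof.

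\textbf{The ``if'' direction.} Your first route (Theorem \ref{thm_Berele} plus $m_\lambda(A)\le m_\lambda(I_p)$) coincides with the paper's and is fine. Drop the second route. The degree of $B_n(\mathfrak{N}_{p-1})$ is not bounded independently of $n$: for $\mathfrak{N}_2=\var(I_3)$, generated by the Grassmann algebra $E$, one has $B_n(E)\cong\bigoplus_{2j\le n}\Lambda^{2j}V$. A bound uniform in $n$ holds only for $\mu_1$, which is all your Pieri estimate actually uses.
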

	
	\begin{proof}
		Let $I = T(A)$.
		Suppose at first that there exists a positive integer $p$ such that $I_{p} \subseteq I$. Then, Theorem \ref{thm_Berele} implies that $\omega_0(A) \leq \omega_0(I_p) = 1$. Hence, this direction of the statement follows.
		
		Next, assume that there is no integer $p$ such that $I_p \subseteq I$. Then, Theorem \ref{prop_Bn_finite} implies that $B_n(\mathfrak{V}_I)$ is infinite-dimensional. 
		Let us write the $\GL(n)$-module decomposition of $B_n(\mathfrak{V}_I)$ in the following way:
		\[
		B_n(\mathfrak{V}_I) \cong_{\GL(n)} \bigoplus_{\mu} k_{\mu}(I)V_{\mu},
		\]
		where $V_{\mu}$ denotes the irreducible $\GL(n)$-module corresponding to the partition $\mu$ and by $k_{\mu}(I)$ we denote the corresponding multiplicity.
		
		Suppose on the contrary that $\omega_0(A) = 1$. This implies that there exists an integer $l$ such that for all partitions $\lambda$ which appear with nonzero multiplicity in the decomposition (\ref{eq_decomp_GL}), we have that $\lambda_2 \leq l$.
	However, the infinite dimensionality of $B_n(\mathfrak{V}_I)$ implies that there exists a partition $\mu$ with $\mu_1 \geq l+1$ such that $V_{\mu}$ appears in the decomposition of $B_n(\mathfrak{V}_I)$. Hence, by Equation (\ref{eq_Drensky}), $V_{\mu} \otimes S(V)$ appears in the decomposition of $K \left\langle X_n \right\rangle/I$. This implies that there exists $\lambda$ with $\lambda_2 \geq l+1$ such that $m_{\lambda}(I) \neq 0$. This contradiction completes the proof.
	\end{proof}

Recall that the $m$-th codimension of a PI-algebra $A$ is defined as
\[
c_m (A) = \dim P_m(A).
\]	

In \cite{GaZ}, Giambruno and Zaicev prove that if $A$ is any PI-algebra, then the limit 
\[
\exp (A) = \lim_{m \rightarrow \infty} \sqrt[m]{c_m(A)}
\]

exists and is a non-negative integer (see \cite{GaZ}, Theorem 6.5.2). This limit is called the PI exponent of $A$. In \cite{Be}, Berele shows that the following inequalities hold
\[
\omega_0(A) \leq \exp(A) \leq \omega_0(A) + \omega_1(A).
\] 

Hence, Corollary \ref{coro_w0} implies that if $A$ is unital and not Lie nilpotent and is such that $\exp(A) = 2$, then necessarily $\omega_0(A) = 2$.

\section{On the cocharacter sequence of $A$ when $T(A) = I_{p_1+1} \cdots I_{p_k+1}$} \label{sec_productOfIdeals}
	
	In this section we discuss the multiplicities $m_{\lambda}(A)$ (or $m_{\lambda}(I))$ from (\ref{eq_cochar}) and (\ref{eq_decomp_GL})  when $T(A)$ is a product of the form $I = I_{p_1 +1} \cdots I_{p_k +1}$. These multiplicities are explicitly known in several particular cases. It is well-known that when $A = U_k(K)$ is the algebra of $k \times k$ upper triangular matrices with coefficients from the field $K$, then $T(A) = \underbrace{I_2 \cdots I_2}_k$. When $k = 2$, the cocharacters of $U_2(K)$ are explicitly known (see e.g., \cite{MRZ} or \cite{BBDGK}) and the case of general $k$ was studied by Boumova and Drensky in \cite{BD} with explicit results for ``large'' partitions $\lambda$. In addition, if $A = U_k(E)$, the algebra of $k \times k$ upper triangular matrices with coefficients from the infinite-dimensional Grassmann algebra $E$, then $T(A) = \underbrace{I_3 \cdots I_3}_k$. The cocharacters of $U_2(E)$ were computed by Centrone in \cite{C}. In the same paper, Centrone computes also the cocharacters of the following matrix algebra
	
	\begin{align*}
		G = \begin{pmatrix}
			E & E \\
			0 & E^0
	\end{pmatrix},
	\end{align*}

where $E^0$ is the even part of the Grassmann algebra $E$ with the usual $\ZZ_2$-grading. For the algebra $G$ it holds that $T(G)  = I_3I_2$. The general case of cocharacters of $U_k(E)$ was considered in \cite{CDM}.
	
	In this section, we compute $\omega_0(A)$, $\omega_1(A)$ and $s_2(A)$ from Theorem \ref{thm_Berele} in the general case when $T(A) = I_{p_1 +1} \cdots I_{p_k+1}$.
	
	We recall that if $R = \bigoplus_{\mu = (\mu_1, \dots, \mu_n) \in \ZZ^n_{\geq 0}} R^{\mu}$ is an algebra with a $\ZZ^n_{\geq 0}$-grading we define the Hilbert series of $R$ with respect to the $\ZZ^n_{\geq 0}$-grading to be the following formal power series
	\[
	H(R, t_1, \dots, t_n) = \sum_{\mu \in \ZZ^n_{\geq 0}} \operatorname{dim} R^{\mu} t_1^{\mu_1} \cdots t_n^{\mu_n}.
	\]
	
	This notion generalizes the standard notion of a Hilbert (or Poincar\'{e}) series for an algebra with a $\ZZ$-grading.
	
	For shortness, we denote $T = \{t_1, \dots, t_n\}$ and we write for the Hilbert series $H(R, T) = H(R, t_1, \dots, t_n)$.
	
	The relatively free algebra $F_n(\mathfrak{V}_I) = K\left \langle X_n \right\rangle/ (I \cap K\left \langle X_n \right\rangle)$ has a $\ZZ^n_{\geq 0}$-grading which comes from the weight space decomposition as a $\GL(n)$-module and hence its Hilbert series coincides with its character as a $\GL(n)$-module. In other words, if 
	\[
	F_n(\mathfrak{V}_I) \cong_{\GL(n)} \bigoplus_{\lambda = (\lambda_1, \dots, \lambda_n)} m_{\lambda}(I) V_{\lambda}
	\]
	as in Equation (\ref{eq_decomp_GL}), then for the Hilbert series of $F_n(\mathfrak{V}_I)$ we have the expression
	\[
	H(F_n(\mathfrak{V}_I), T) = \sum_{\lambda = (\lambda_1, \dots, \lambda_n)} m_{\lambda}(I) S_{\lambda}(T),
	\]
	where $S_{\lambda}(T) = S_{\lambda}(t_1, \dots, t_n)$ denotes the Schur polynomial corresponding to the partition $\lambda$. Thus, the Hilbert series $H(F_n(\mathfrak{V}_I), T)$ gives useful information about the multiplicities $m_{\lambda}(I)$. 
	
	Following \cite{C}, we define also the proper Hilbert series of $F_n(\mathfrak{V}_I)$ to be the Hilbert series of the subalgebra of proper polynomials in $F_n(\mathfrak{V}_I)$, i.e.,
	\[
	H^B(F_n(\mathfrak{V}_I), T) = H(B_n(\mathfrak{V}_I), T).
	\]
	
	By Equation (\ref{eq_Drensky}), we have the following relation between the Hilbert series and the proper Hilbert series of a relatively free algebra:
	
	\begin{align} \label{eq_properHilbertSeries}
		H(F_n(\mathfrak{V}_I), T) = \prod_{i = 1}^n\frac{1}{1-t_i}H^B(F_n(\mathfrak{V}_I), T).
	\end{align}

	Next, when a $T$-ideal $I$ is a product of two $T$-ideals $I = J_1J_2$, then the following relation between the Hilbert series was given by Formanek (\cite{F}):
	
	\begin{equation} \label{eq_ProductHilbertSeries}
		\begin{aligned}
			H(F_n(\mathfrak{V}_I), T) = &H(F_n(\mathfrak{V}_{J_1}), T) + H(F_n(\mathfrak{V}_{J_2}), T) + \\
			&(S_{(1)}(T) - 1)H(F_n(\mathfrak{V}_{J_1}), T)H(F_n(\mathfrak{V}_{J_2}), T),
		\end{aligned}
	\end{equation}
	
	where $S_{(1)}(t_1, \dots, t_n) = t_1+ \dots + t_n$ denotes the Schur polynomial corresponding to the partition $\lambda = (1, 0, \dots, 0)$.
	
	When a $T$-ideal $I$ is a product of several $T$-ideals $I = J_1 \cdots J_k$ for some $k \geq 2$, then we can iterate Equation (\ref{eq_ProductHilbertSeries}) to obtain the following:
	
	\begin{equation} \label{eq_ProductManyHilbertSeries}
		\begin{aligned}
			H(F_n(\mathfrak{V}_I), T) = \sum_{r = 1}^k(S_{(1)}-1)^{r-1} \sum_{1 \leq i_1 < \cdots <i_r \leq k} H(F_n(\mathfrak{V}_{J_{i_1}}), T) \cdots H(F_n(\mathfrak{V}_{J_{i_r}}), T).
		\end{aligned}
	\end{equation}		
	
	Using Equations (\ref{eq_properHilbertSeries}) and (\ref{eq_ProductHilbertSeries}), when $I = J_1J_2$ one can easily derive a relation between the proper Hilbert series of $F_n(\mathfrak{V}_I)$ on the one hand and the proper Hilbert series of $F_n(\mathfrak{V}_{J_1})$ and $F_n(\mathfrak{V}_{J_2})$ on the other hand. The relation is as follows (see also \cite{C}):
	
	\begin{equation}\label{eq_productProperHilbertSeries}
		\begin{aligned}
			H^B(F_n(\mathfrak{V}_I), T) = &H^B(F_n(\mathfrak{V}_{J_1}), T) + H^B(F_n(\mathfrak{V}_{J_2}), T) + \\
			&\frac{S_{(1)}(T) - 1}{\prod_{i=1}^n(1-t_i)}H^B(F_n(\mathfrak{V}_{J_1}),T)H^B(F_n(\mathfrak{V}_{J_2}), T).
		\end{aligned}
	\end{equation}

	Formula (\ref{eq_ProductHilbertSeries}) together with Littlewood-Richardson rule for multiplication of Schur polynomials lead to the following observation.
	
	\begin{proposition} \label{prop_product}
		Let $A$, $B_1$, and $B_2$ be PI-algebras such that $T(A) = T(B_1)T(B_2)$. 
		Let $\omega_0$, $\omega_1$, $s_1$ and $s_2$ be as before. Then, the following relations hold:
		\begin{itemize}
			\item[(1)] $\omega_0(A) = \omega_0(B_1) + \omega_0(B_2)$;
			\item[(2)] $\omega_1(A) = \omega_1(B_1) + \omega_1(B_2)$;
			\item[(3)] $s_1(A) = s_1(B_1) + s_1(B_2) + 1$;
			\item[(4)] $s_2(A) = s_2(B_1) + s_2(B_2) +1$;
		\end{itemize}
	\end{proposition}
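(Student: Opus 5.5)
We work with the $\GL(n)$-language, $n$ large, and read off shapes from the Hilbert series. By Formanek's formula (\ref{eq_ProductHilbertSeries}),
\[
H(F_n(\mathfrak{V}_I),T)=H_1+H_2+(S_{(1)}-1)H_1H_2 ,
\]
where $H_i=H(F_n(\mathfrak{V}_{T(B_i)}),T)=\sum_\mu m_\mu(B_i)S_\mu$. Since $T(A)\subseteq T(B_i)$, every $\lambda$ occurring for $B_i$ also occurs for $A$. Rather than reading cancellations off the signed expression, I would use the underlying module statement behind Formanek's formula. The idea is that
\[
T(B_1)/T(B_1)T(B_2)\cong \big(K\langle X_n\rangle/T(B_1)\big)\otimes V\otimes \big(K\langle X_n\rangle/T(B_2)\big)\quad\text{(up to the relevant identifications)}.
\]
Concretely, $(S_{(1)}-1)H_1H_2+H_2=H_1+S_{(1)}H_1H_2-H_1H_2+H_2$ shows that the character of $F_n(\mathfrak{V}_{T(A)})$ equals $H_2$ plus a genuine character $S_{(1)}H_1H_2-H_1H_2+H_1$. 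I expect the latter to be the character of $T(B_2)/T(B_1)T(B_2)$, which is $\cong F(B_1)\otimes V\otimes F(B_2)$ modulo a submodule isomorphic to $F(B_1)\otimes F(B_2)$ with the $H_1$-term absorbed.

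The cleanest route, and the one I would commit to, is to prove two inclusions on supports. Writing $\mathrm{supp}(X)$ for the set of partitions $\lambda$ with $m_\lambda\neq0$, the claim is
\[
\mathrm{supp}(A)\subseteq \mathrm{supp}(B_1)\cup\mathrm{supp}(B_2)\cup\{\lambda\subseteq \mu\cdot(1)\cdot\nu\},
\]
and conversely $\mathrm{supp}(A)$ contains every LR-constituent of $S_\mu S_{(1)} S_\nu$ that is not cancelled. Here $\mu\cdot(1)\cdot\nu$ denotes LR-constituents of $S_\mu S_{(1)}S_\nu$ with $\mu\in\mathrm{supp}(B_1)$ and $\nu\in\mathrm{supp}(B_2)$. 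For the upper bound I would use the Littlewood–Richardson bound directly: any $\lambda$ appearing in $S_\mu S_{(1)} S_\nu$ satisfies
\[
\lambda_{a+b+1}\le \mu_{a+1}+\nu_{b+1}+1 \qquad\text{and}\qquad \lambda'_{a+b+1}\le \mu'_{a+1}+\nu'_{b+1}+1.
\]
These follow from the Weyl-type inequality $c^\lambda_{\alpha\beta}\ne0\Rightarrow \lambda_{i+j-1}\le\alpha_i+\beta_j$, applied twice. Taking $a=\omega_0(B_1)$ and $b=\omega_0(B_2)$ gives that $A$ has at most $\omega_0(B_1)+\omega_0(B_2)$ unbounded rows, with $\lambda_{\omega_0(A)+1}\le s_2(B_1)+s_2(B_2)+1$. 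The conjugate version gives the bounds for $\omega_1$ and $s_1$. The terms $H_1,H_2$ and $-H_1H_2$ only contribute shapes already dominated by these bounds.

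For the lower bounds (equality), I would exhibit shapes explicitly. Choose $\mu\in\mathrm{supp}(B_1)$ with $\omega_0(B_1)$ large rows and $\mu_{\omega_0(B_1)+1}=s_2(B_1)$, and similarly $\nu$ for $B_2$. Then consider the extremal LR-constituent $\lambda$ of $S_\mu S_{(1)}S_\nu$ obtained by interleaving rows: place the large rows of $\mu$ and $\nu$ first (scaled so they are distinct), then the row $s_2(B_1)+s_2(B_2)+1$. This $\lambda$ has LR-coefficient $1$ in $S_\mu S_{(1)}S_\nu$. The main obstacle is showing that such an extremal $\lambda$ is not cancelled by $-H_1H_2$ and does not require $m_\lambda(A)\ne0$ to be assumed. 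I would handle this by degree and extremality: $\lambda$ has size $|\mu|+|\nu|+1$ and exceeds the $s_2$-bounds of every shape in $H_1$, $H_2$, or any product $S_{\mu'}S_{\nu'}$ of that degree, because $S_{\mu'}S_{\nu'}$ shapes obey $\lambda_{a+b+1}\le s_2(B_1)+s_2(B_2)$. Hence its coefficient in the total character equals its positive coefficient in $S_{(1)}H_1H_2$, which is nonzero. The same argument applied to conjugates gives (2) and (3).
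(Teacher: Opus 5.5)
Your upper-bound half is sound and sharpens what the paper does. The inequality $c^\lambda_{\alpha\beta}\neq0\Rightarrow\lambda_{i+j-1}\le\alpha_i+\beta_j$, applied to the four kinds of terms in Formanek's formula, gives $\lambda_{a+b+1}\le s_2(B_1)+s_2(B_2)+1$ with $a=\omega_0(B_1)$, $b=\omega_0(B_2)$. You are also right that the lower bounds need a non-cancellation argument. The paper's proof does not supply one: it takes a constituent of $S_\mu S_\nu$, which enters Formanek's formula with a minus sign. It also never argues $s_2(A)\ge s_2(B_1)+s_2(B_2)+1$.

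The gap is the opening step of your lower-bound paragraph. The definitions do not give a single $\mu\in\mathrm{supp}(B_1)$ that has $a$ arbitrarily large rows \emph{and} $\mu_{a+1}=s_2(B_1)$. $\omega_0$ and $s_2$ are separate extrema and need not be realized by the same shapes. Both of your lower bounds rest on this choice, because you deduce non-cancellation from $\lambda_{a+b+1}=s_2(B_1)+s_2(B_2)+1$.

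For (4) the gap cannot be repaired in the stated generality. Let $B_1=U_2(K)\times C$ with $T(C)=I_4$, and let $B_2=K$.
\begin{itemize}
\item The support of $B_1$ is the union of the supports of $U_2(K)$ and of $C$. $U_2(K)$ contributes two unbounded rows with $\mu_3\le 1$.
\item $C$ has $\mu_2\le 2$ (recall $s_2=p-1$ for $T=I_{p+1}$). Its support contains $(m,2,2)$: $[x_1,x_2]^2\notin I_4$ is a highest weight vector of weight $(2,2)$ in $B_n(\mathfrak{N}_3)$, and (\ref{eq_Drensky}) then gives $(m,2,2)$.
\item Hence $\omega_0(B_1)=2$ and $s_2(B_1)=2$. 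Any $\mu$ in the support with $\mu_3=2$ comes from $C$, so it has $\mu_2=2$.
\end{itemize}

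Now take $T(A)=T(B_1)I_2$. Formanek's formula becomes $H(F_n(\mathfrak{V}_{T(A)}),T)=H_2+H_1\sum_{m\ge1}S_{(m,1)}$, a sum of genuine characters, so no cancellation occurs. Every $\lambda\neq(m)$ in the support of $A$ therefore lies in some $S_\mu S_{(m,1)}$ with $\mu\in\mathrm{supp}(B_1)$.
\begin{itemize}
\item For such $\lambda$ we have $\lambda_4\le\mu_3+1$.
\item $\lambda_4=3$ would need $\mu_3=2$, hence $\mu_2=2$. Then $\lambda/\mu$ would contain three cells in column $3$, which is impossible for an LR filling with content $(m,1)$.
\item The shape $(M,M,M,1)$ does occur, coming from $\mu=(M,M,1)$.
\end{itemize}
So $\omega_0(A)=3$, but $s_2(A)=2$, not $3$.

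Your construction does go through whenever the simultaneous choice is available. This holds, for example, for unital factors with $\omega_0=1$, where the first row can be lengthened freely. You can carry that property along the induction for the products $I_{p_1+1}\cdots I_{p_k+1}$ of Corollary \ref{coro_multProduct}. But it must be added as a hypothesis, and the lower bound in (1) for general $B_1,B_2$ would need a different non-cancellation argument.
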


\begin{proof}
	We will prove first part (1) and part (4). 
	Let us denote $I = T(A)$ and $J_i = T(B_i)$ for $i = 1,2$. To show that $\omega_0(A) \geq \omega_0(B_1) + \omega_0(B_2)$ we need to show that for each positive integer $\ell$, there exists a partition $\lambda$ with $\lambda_{\omega_0(B_1) + \omega_0(B_2)} > \ell$ such that $m_{\lambda}(A) \neq 0$. In other words, the Schur function $S_{\lambda}(T)$ appears with nonzero multiplicity in the Hilbert series $H(F_n(\mathfrak{V}_I), T)$. We take partitions $\mu$ and $\nu$ such that $\mu_{\omega_0(B_1)} > \ell$ and $\nu_{\omega_0(B_2)} > \ell$ and such that $S_{\mu}(T)$ appears with nonzero multiplicity in the Hilbert series $H(F_n(\mathfrak{V}_{J_1}), T)$ and $S_{\nu}(T)$ appears with nonzero multiplicity in the Hilbert series $H(F_n(\mathfrak{V}_{J_2}), T)$. We can do that by the definition of $\omega_0$. Then, Littlewood-Richardson rule implies that in the product $S_{\mu}(T)S_{\nu}(T)$ there will be a polynomial $S_{\lambda}(T)$ with the desired properties of $\lambda$. 
	
	To show that $\omega_0(A) \leq \omega_0(B_1) + \omega_0(B_2)$  and prove also part (4) we need to show that for every partition $\lambda$ for which $S_{\lambda}(T)$ appears with nonzero multiplicity in $H(F_n(\mathfrak{V}_I), T)$, it holds that $\lambda_{\omega_0(B_1) + \omega_0(B_2) + 1} \leq s_2(B_1) + s_2(B_2) + 1$. 
	
	Let us take a partition $\lambda$, such that $S_{\lambda}(T)$ appears with nonzero multiplicity in $H(F_n(\mathfrak{V}_I), T)$. Then, Formula (\ref{eq_ProductHilbertSeries}) implies that $S_{\lambda}(T)$ appears with nonzero multiplicity at least in one of the expressions $H(F_n(\mathfrak{V}_{J_1}), T)$, $H(F_n(\mathfrak{V}_{J_2}), T)$ or $(S_{(1)}(T) - 1)H(F_n(\mathfrak{V}_{J_1}), T)H(F_n(\mathfrak{V}_{J_2}), T)$. If $S_{\lambda}(T)$ appears with nonzero multiplicity in $H(F_n(\mathfrak{V}_{J_i}), T)$ for $i=1,2$, then $\lambda_{\omega_0(B_i)+1} \leq s_2(B_i)$ which implies the desired property. 
	
	Let us now consider the case when $S_{\lambda}(T)$ appears with nonzero multiplicity in $S_{(1)}(T)H(F_n(\mathfrak{V}_{J_1}), T)H(F_n(\mathfrak{V}_{J_2}), T)$. Then, $S_{\lambda}(T)$ appears in a product of Schur polynomials of the form $S_{(1)}(T)S_{\mu}(T)S_{\nu}(T)$, such that $S_{\mu}(T)$ appears with nonzero multiplicity in $H(F_n(\mathfrak{V}_{J_1}), T)$ and $S_{\nu}(T)$ appears with nonzero multiplicity in $H(F_n(\mathfrak{V}_{J_2}), T)$. By using again Littlewood-Richardson rule we prove the desired property.	
	
	To prove parts (2) and (3), we use the following symmetry of the Littlewood-Richardson rule (which can be deduced e.g. from \cite{Mc}, Chapter I.3): Let $\lambda$, $\mu$, and $\nu$ be partitions and let $\lambda'$, $\mu'$, and $\nu'$ denote the conjugate partitions. If we have that
	\[
	S_{\mu}(T) S_{\nu}(T) = \sum_{\lambda} c^{\lambda}_{\mu, \nu} S_{\lambda}(T) 
	\]
	
	and
	\[
	S_{\mu'}(T) S_{\nu'}(T) = \sum_{\lambda'} c^{\lambda'}_{\mu', \nu'} S_{\lambda'}(T) 
	\]
	
	then $c^{\lambda}_{\mu, \nu} = c^{\lambda'}_{\mu', \nu'}$.
	
	To show that $\omega_1(A) \geq \omega_1(B_1) + \omega_1(B_2)$ we need to show that for each positive integer $\ell$, there exists a partition $\lambda$ with $\lambda'_{\omega_1(B_1) + \omega_1(B_2)} > \ell$ such that $m_{\lambda}(A) \neq 0$. We take partitions $\mu$ and $\nu$ such that $\mu'_{\omega_1(B_1)} > \ell$ and $\nu'_{\omega_1(B_2)} > \ell$ and such that $S_{\mu}(T)$ appears with nonzero multiplicity in the Hilbert series $H(F_n(\mathfrak{V}_{J_1}), T)$ and $S_{\nu}(T)$ appears with nonzero multiplicity in the Hilbert series $H(F_n(\mathfrak{V}_{J_2}), T)$. We can do that by the definition of $\omega_1$. Then, in the product $S_{\mu'}(T)S_{\nu'}(T)$ there will be a polynomial $S_{\lambda'}(T)$ with $\lambda'_{\omega_1(B_1) + \omega_1(B_2)} > \ell$. By the above mentioned symmetry of the Littlewood-Richardson rule, if $S_{\lambda'}(T)$ appears in the product $S_{\mu'}(T)S_{\nu'}(T)$, then $S_{\lambda}(T)$ will appear in the product $S_{\mu}(T)S_{\nu}(T)$ and hence $m_{\lambda}(A) \neq 0$. This shows that $\omega_1(A) \geq \omega_1(B_1) + \omega_1(B_2)$.
	The rest of part (2) and part (3) is proven analogously.
\end{proof}	
	
	Iterating the above proposition we obtain:
	
	\begin{proposition} \label{prop_multProduct}
		Let $A$, $B_1, \dots, B_k$ for some $k \geq 2$ be PI-algebras with $T(A) = T(B_1)\cdots T(B_k)$.
		Then
		\begin{itemize}
			\item $\omega_0(A) = \omega_0(B_1) + \cdots + \omega_0(B_k)$;
			\item $\omega_1(A) = \omega_1(B_1) + \cdots + \omega_1(B_k)$;
			\item $s_1(A) = s_1(B_1) + \cdots + s_1(B_k) + k-1$;
			\item $s_2(A) = s_2(B_1) + \cdots + s_2(B_k) + k-1$;
		\end{itemize}
	\end{proposition}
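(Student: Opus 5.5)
The plan is induction on $k$, with Proposition \ref{prop_product} as both the base case and the inductive step. The only nontrivial input is that a product of several T-ideals can be split as a product of two T-ideals, each realized as the T-ideal of some PI-algebra.

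For $k=2$ the statement is exactly Proposition \ref{prop_product}. Now let $k\geq 3$ and assume the statement for $k-1$ factors. Set $J = T(B_1)\cdots T(B_{k-1})$. A product of T-ideals is again a T-ideal: it is a two-sided ideal, and any endomorphism $\varphi$ of $K\langle X\rangle$ maps a product $f_1\cdots f_{k-1}$ with $f_i \in T(B_i)$ to $\varphi(f_1)\cdots\varphi(f_{k-1})$, whose factors still lie in the respective $T(B_i)$. Every T-ideal is the ideal of identities of some algebra, for instance the relatively free algebra $F(\mathfrak{V}_J)=K\langle X\rangle/J$. Moreover $J \supseteq T(B_1)\cdots T(B_1)$ (taking $k-1$ copies), and a product of T-ideals of PI-algebras contains nonzero polynomials, so $J \neq 0$. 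Hence there is a PI-algebra $C$ with $T(C)=J$. Since $T(A) = T(C)\,T(B_k)$, Proposition \ref{prop_product} applied to $A$, $C$, $B_k$ gives
\[
\omega_0(A)=\omega_0(C)+\omega_0(B_k),\qquad s_2(A)=s_2(C)+s_2(B_k)+1,
\]
together with the analogous identities for $\omega_1$ and $s_1$.

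Applying the induction hypothesis to $C$ with its $k-1$ factors gives $\omega_0(C)=\sum_{i\le k-1}\omega_0(B_i)$ and $s_2(C)=\sum_{i\le k-1}s_2(B_i)+k-2$, and likewise for $\omega_1$ and $s_1$. Substituting these yields the claimed formulas; the constant becomes $(k-2)+1=k-1$.

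The main point to check is that the induction is legitimate: $C$ must be a genuine PI-algebra to which Proposition \ref{prop_product} applies. This is handled by the argument above, since the relatively free algebra of the T-ideal $J$ serves as $C$. Beyond that, the proof is bookkeeping of the additive constants.
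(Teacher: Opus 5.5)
Your induction on $k$, writing $T(A)=J\,T(B_k)$ with $J=T(B_1)\cdots T(B_{k-1})$ realized as $T(C)$ for the relatively free algebra $C=K\langle X\rangle/J$, is correct and is the same iteration of Proposition~\ref{prop_product} that the paper invokes, with the step that $J$ is the T-ideal of a PI-algebra written out. One aside is false but unused: $J\supseteq T(B_1)\cdots T(B_1)$ fails in general (for $T(B_1)=I_2$, $T(B_2)=I_3$ one has $I_2I_2\not\subseteq I_2I_3$ by degree), whereas $J\neq 0$ already follows from each $T(B_i)\neq 0$ and $K\langle X\rangle$ having no zero divisors.
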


Proposition \ref{prop_multProduct} together with Theorem \ref{thm_Berele} and the considerations after it imply the following corollary.
	
	\begin{corollary} \label{coro_multProduct}
		Let $A$ be a PI-algebra with $T(A) = I$ such that $I$ is a product of $k$ ideals $I = I_{p_1+1} \cdots I_{p_k+1}$ for some positive integers $p_1, \dots, p_k$. Let also $k < n$. Then the following hold:
		\begin{itemize}
			\item $\omega_0(A) = k$;
			\item $\omega_1(A) = k$ (This is in the case when $p_i \geq 2$ for all $i=1, \dots, k$);
			\item $s_2(A) = p_1 + \cdots + p_k -1$;
		\end{itemize}
	\end{corollary}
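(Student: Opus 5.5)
The plan is to apply Proposition \ref{prop_multProduct} with $B_i$ any PI-algebra satisfying $T(B_i) = I_{p_i+1}$, for instance the relatively free algebra of $\mathfrak{N}_{p_i}$. Then $T(A) = T(B_1)\cdots T(B_k)$, and we only need the four invariants of each factor $B_i$. The corollary follows by summing them as in Proposition \ref{prop_multProduct}.

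First, $\omega_0(B_i)$. By Theorem \ref{thm_Berele} with $k=1$, and the remark after it, $\omega_0(B_i) = 1$ whenever $p_i \geq 2$. The case $p_i = 1$ has to be checked separately. There $B_i$ is commutative, $F_n(\mathfrak{N}_1) = S(V)$, and only partitions $(m)$ occur, so $\omega_0(B_i)=1$ again. Also $\omega_0 \geq 1$ holds for every unital algebra by Drensky's decomposition (\ref{eq_Drensky}). Summing gives $\omega_0(A) = k$.

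Second, $\omega_1(B_i)$. When $p_i \geq 2$, Theorem \ref{thm_Berele} gives $\omega_1(B_i) = 1$, hence $\omega_1(A)=k$. This explains the hypothesis $p_i\ge 2$ in the second item. For $p_i=1$ we have $\omega_1(B_i)=0$, which would change the sum.

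Third, $s_2(B_i)$. We use the result from \cite{H} quoted after Theorem \ref{thm_Berele}: if $T(B)=I_{p+1}$ then $s_2(B)=p-1$, meaning that every $\lambda$ with $m_\lambda\neq 0$ satisfies $\lambda_2\le p-1$, and this bound is attained. For $p=1$ this says $s_2=0$, which is consistent with the commutative case. Proposition \ref{prop_multProduct} then gives
\[
s_2(A)=\sum_{i=1}^k (p_i-1) + k-1 = p_1+\cdots+p_k-1 .
\]

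The only delicate point is making sure the hypotheses behind the formulas hold in the $\GL(n)$ setting. Proposition \ref{prop_product} constructs partitions with $\omega_0(B_1)+\omega_0(B_2)+1$ (or more) nonzero rows through the Littlewood--Richardson rule, including the extra factor $S_{(1)}$. For such a $\lambda$ to survive as a Schur polynomial in $n$ variables we need enough rows, and the assumption $k<n$ guarantees this. So I would check that the extremal partitions used in the sharpness parts of Proposition \ref{prop_product} have at most $k+1\le n$ rows. Once that is verified, the corollary is a direct substitution.
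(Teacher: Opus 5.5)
Your main argument is the paper's own. You apply Proposition \ref{prop_multProduct} with $T(B_i)=I_{p_i+1}$, take $\omega_0(B_i)=\omega_1(B_i)=1$ from Theorem \ref{thm_Berele}, and take $s_2(B_i)=p_i-1$ from \cite{H}. Your separate treatment of $p_i=1$ (where $\omega_0=1$, $\omega_1=0$, $s_2=0$) is correct and explains the restriction in the second item.

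The one thing to drop is your final ``delicate point'', on which you make the conclusion contingent. The quantities $\omega_0(A)$, $\omega_1(A)$, $s_2(A)$ are defined through the $S_m$-cocharacters of $A$ and do not depend on $n$, so there is no row count to verify. For the same reason, take $B_i$ to be the countably generated relatively free algebra of $\mathfrak{N}_{p_i}$, since a finite-rank one can satisfy more identities than $I_{p_i+1}$.

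The check you propose also could not succeed for the second item. Partitions witnessing $\omega_1(A)=k$ must have $\lambda'_k$ arbitrarily large, and hence arbitrarily many rows, so they can never have at most $k+1$ rows. The hypothesis $k<n$ plays no role in the derivation; it only ensures that $F_n(\mathfrak{V}_I)$ has room for partitions with $k+1$ nonzero rows.
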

	
	In particular, Corollary \ref{coro_multProduct} implies the following. If $I = I_{p_1+1} \cdots I_{p_k+1}$ and $\lambda$ is a partition such that $m_{\lambda}(I) \neq 0$ then $\lambda_{k+1} \leq p_1 + \cdots + p_k -1$. 
	Using Equation (\ref{eq_ProductManyHilbertSeries}), we obtain a further restriction on the partition $\lambda$.
	
	\begin{proposition} \label{prop_bound_lambda}
		Let $I = I_{p_1+1} \cdots I_{p_k+1}$ and let $\lambda$ be a partition such that $m_{\lambda}(I) \neq 0$. Then, $\lambda_{k+1} \leq p_1 + \cdots + p_k -1$ and $\lambda_{2k} \leq p_1 + \cdots + p_k -k$.
\end{proposition}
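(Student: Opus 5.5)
The first inequality $\lambda_{k+1}\le p_1+\cdots+p_k-1$ is exactly the statement $s_2(A)=p_1+\cdots+p_k-1$ of Corollary \ref{coro_multProduct}, which holds whenever $\omega_0(A)=k$. So the work is in the second bound. For that I would expand the Hilbert series using Equation (\ref{eq_ProductManyHilbertSeries}) with $J_i=I_{p_i+1}$. Every Schur function $S_\lambda$ that appears in $H(F_n(\mathfrak{V}_I),T)$ with nonzero coefficient must also appear, with nonzero coefficient, in some product
\[
S_{(1)}^{a}\,S_{\mu^{(1)}}\cdots S_{\mu^{(r)}},
\]
where $0\le a\le r-1$, $1\le r\le k$, and each $\mu^{(j)}$ has $m_{\mu^{(j)}}(I_{p_{i_j}+1})\ne 0$. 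Expanding $(S_{(1)}-1)^{r-1}$ by the binomial theorem gives only nonnegative combinations of such products up to sign. Cancellation can only remove terms, so this necessary condition still holds.

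Next I would use the known shape of $\mu^{(j)}$. From \cite{H} we have $s_2(I_{p+1})=p-1$, so $\mu^{(j)}_2\le p_{i_j}-1$. Each $\mu^{(j)}$ therefore has a long first row, and all its other rows have length at most $p_{i_j}-1$.

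The core step is a Littlewood--Richardson estimate. I would use that $\lambda_{i+j-1}\le \mu_i+\nu_j$ for any constituent $S_\lambda$ of $S_\mu S_\nu$ (a Weyl/Horn-type inequality). Iterating it over all factors gives
\[
\lambda_{\sum_j \ell_j - (r+a)+1}\le \sum_j \mu^{(j)}_{\ell_j}+a\cdot 1,
\]
where the factor $S_{(1)}$ enters with row index $1$ and value $1$, or else with row index $2$ and value $0$. I would choose, for each $\mu^{(j)}$, the row index $\ell_j=2$, which contributes at most $p_{i_j}-1$. For each of the $a$ copies of $S_{(1)}$ I would choose the index $2$, which contributes $0$. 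The index of $\lambda$ is then $2(r+a)-(r+a)+1=r+a+1\le 2r\le 2k$. The bound obtained is $\sum_j(p_{i_j}-1)\le p_1+\cdots+p_k-k$, which already handles the case $r=k$.

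For $r<k$ the same estimate gives $\lambda_{r+a+1}\le \sum_{j\le r}(p_{i_j}-1)$. Since $r+a+1\le 2k$, we get $\lambda_{2k}\le\lambda_{r+a+1}$, and the right-hand side is bounded by $p_1+\cdots+p_k-k$ because every omitted $p_i-1\ge0$. The main obstacle is stating and justifying the iterated inequality carefully. It can be proved by viewing $S_\mu S_\nu$ through the LR tableau: in an LR filling of $\lambda/\mu$ by content $\nu$, the entries in each column are strictly increasing and respect the lattice-word condition. This bounds row $i+j-1$ of $\lambda$ by $\mu_i$ plus the number of entries $\ge j$, and the latter is at most $\nu_j$ per row. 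I would then verify the small cases $k=1$ (where the claim reads $\lambda_2\le p-1$, as in \cite{H}) and $k=2$ directly as a check.
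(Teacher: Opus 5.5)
Your proof is correct. It shares the paper's global structure: expand via (\ref{eq_ProductManyHilbertSeries}), observe that $m_\lambda(I)\neq 0$ forces $S_\lambda$ to occur in some $S_{(1)}^a S_{\mu^{(1)}}\cdots S_{\mu^{(r)}}$ with $\mu^{(j)}\in H(1,p_{i_j}-1)$, and then bound the rows of $\lambda$ by Littlewood--Richardson. The difference is the combinatorial lemma.

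The paper uses the fact that $S_{\mu^{(1)}}\cdots S_{\mu^{(r)}}$ has all its constituents in the hook $H(r,\sum_j p_{i_j}-r)$, and then counts boxes. Multiplying by $S_{(1)}^{r-1}$ adds $r-1$ boxes, so row $r+1$ grows by at most $r-1$, and one of the rows $r+1,\dots,2r$ is left untouched, which gives $\lambda_{2r}\le\sum_j p_{i_j}-r$.

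You replace both of these steps by the iterated Weyl-type inequality $\lambda_{i+j-1}\le\mu_i+\nu_j$, treating each $S_{(1)}$ as a one-box partition placed at index $2$. Your tableau sketch of that inequality is sound. Column-strictness forces the boxes of row $i+j-1$ beyond column $\mu_i$ to carry entries $\ge j$. The lattice condition then telescopes to show that a single row contains at most $\nu_j$ entries $\ge j$.

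\textbf{What each route buys.} The paper's route needs only the familiar hook-product property together with Pieri's rule. Yours needs the Weyl inequality, but it is more uniform; the hook-product property is the special case $\lambda_{k_1+k_2+1}\le\mu_{k_1+1}+\nu_{k_2+1}$. It also gives the first bound for free: placing the $S_{(1)}$ factors at index $1$ instead yields $\lambda_{r+1}\le\sum_j(p_{i_j}-1)+a\le\sum_j p_{i_j}-1$, so you do not need to cite Corollary \ref{coro_multProduct}.

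Your version is also more explicit than the paper on two points. First, it handles the lower powers $S_{(1)}^a$ with $a<r-1$ that arise from expanding $(S_{(1)}-1)^{r-1}$. Second, it spells out the passage from $r<k$ to the stated bound via $\lambda_{2k}\le\lambda_{2r}$ and $p_i\ge 1$.
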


\begin{proof}
	We need to show that $\lambda_{2k} \leq p_1 + \cdots + p_k -k$. We choose an integer $r$ between $1$ and $k$. Let $\lambda$ be a partition such that $S_{\lambda}(T)$ appears with non-zero multiplicity in the product
	\[
	(S_{(1)})^{r-1} \sum_{1 \leq i_1 < \cdots <i_r \leq k} H(F_n(\mathfrak{V}_{J_{i_1}}), T) \cdots H(F_n(\mathfrak{V}_{J_{i_r}}), T).
	\]
	
	Therefore, there exist integers $1 \leq i_1 < \cdots <i_r \leq k$ and  partitions $\mu_{i_1}, \dots, \mu_{i_r}$ such that $\mu_{i_j} \in H(1, p_{i_j} -1)$ and such that $S_{\lambda}(T)$ appears with non-zero multiplicity in the product 
	\[
	(S_{(1)})^{r-1} S_{\mu_{i_1}}(T) \cdots S_{\mu_{i_r}}(T).
	\] 
	Hence, $\lambda$ is obtained by adding $r-1$ boxes to a partition $\tilde{\lambda}$ which is contained in the hook $H(r, p_{i_1}+ \cdots + p_{i_r} - r)$. Thus, $\lambda_{r+1} \leq p_{i_1} + \cdots + p_{i_r} - 1$ and $\lambda_{2r} \leq  p_{i_1}+ \cdots + p_{i_r} - r$.
	
	We can do these considerations for each $r$ between $1$ and $k$ and when $r = k$ we obtain the statement of the proposition.	
\end{proof}

	The fact that the subalgebra of proper polynomials $B_n(\mathfrak{N}_p)$ in $F_n(\mathfrak{N}_p)$ is finite-dimensional for every $p \geq 1$ implies that the proper Hilbert series $H^B(F_n(\mathfrak{N}_p), T)$ is a polynomial. Therefore, using Formula (\ref{eq_productProperHilbertSeries}) and the relation between the proper Hilbert series and the Hilbert series given by Formula (\ref{eq_properHilbertSeries}) we obtain a general form for the Hilbert series and the proper Hilbert series of any relatively free algebra $F_n(\mathfrak{V}_I)$ where $I$ is a $T$-ideal of the form $I=I_{p_1+1}\cdots I_{p_k+1}$.

	\begin{proposition} \label{prop_MultProductProper}
		Let $I$ be a $T$-ideal such that $I$ is a product of $k$ ideals $I = I_{p_1+1} \cdots I_{p_k+1}$ for some positive integers $p_1, \dots, p_k$. Let also $k < n$. Then the following hold:
		\begin{itemize}
			\item [(i)]
			\[
			H^B(F_n(\mathfrak{V}_I), T) = P_1(T) + \frac{P_2(T)}{\prod_{i= 1}^n(1-t_i)} + \frac{P_3(T)}{\prod_{i= 1}^n(1-t_i)^2} + \dots + \frac{P_k(T)}{\prod_{i= 1}^n(1-t_i)^{k-1}};
			\]
			
			\item [(ii)]
			\[
			H(F_n(\mathfrak{V}_I), T) = \frac{P_1(T)}{\prod_{i= 1}^n(1-t_i)} + \frac{P_2(T)}{\prod_{i= 1}^n(1-t_i)^2} + \frac{P_3(T)}{\prod_{i= 1}^n(1-t_i)^3} + \dots + \frac{P_k(T)}{\prod_{i= 1}^n(1-t_i)^{k}},
			\]
		\end{itemize}
		where $P_1(T), \dots, P_k(T)$ are the following polynomials
		
		\begin{align*}
			&P_1(T) = \sum_{i= 1}^k H^B(F_n(\mathfrak{N}_{p_i}), T);\\
			&P_2(T) = (S_{(1)}(T) -1) \sum_{1 \leq i < j \leq k}H^B(F_n(\mathfrak{N}_{p_i}), T)H^B(F_n(\mathfrak{N}_{p_j}), T);\\
			&\dots \dots \\
			&P_r(T) =(S_{(1)}(T) -1)^{r-1} \sum_{1 \leq i_1 < \dots < i_r \leq k}H^B(F_n(\mathfrak{N}_{p_{i_1}}), T)\cdots H^B(F_n(\mathfrak{N}_{p_{i_r}}), T);\\
			&\dots \dots \\
			&P_k(T) = (S_{(1)}(T) -1)^{k-1}H^B(F_n(\mathfrak{N}_{p_{1}}), T)\cdots H^B(F_n(\mathfrak{N}_{p_{k}}), T).
		\end{align*}
		
	\end{proposition}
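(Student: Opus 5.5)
The plan is to obtain (ii) first, directly from Formanek's iterated product formula (\ref{eq_ProductManyHilbertSeries}), and then deduce (i) from (ii) via the relation (\ref{eq_properHilbertSeries}). Write $Q(T) = \prod_{i=1}^n (1-t_i)^{-1}$ throughout. For each factor $J_i = I_{p_i+1}$ we have $\mathfrak{V}_{J_i} = \mathfrak{N}_{p_i}$. By (\ref{eq_properHilbertSeries}) applied to $J_i$,
\[
H(F_n(\mathfrak{N}_{p_i}), T) = Q(T)\, H^B(F_n(\mathfrak{N}_{p_i}), T).
\]
Since $B_n(\mathfrak{N}_{p_i})$ is finite-dimensional (Theorem 3.3 of \cite{H}, as used in Theorem \ref{prop_Bn_finite}), each $H^B(F_n(\mathfrak{N}_{p_i}), T)$ is a polynomial. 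This is what makes the $P_r(T)$ polynomials.

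Next, substitute these expressions into (\ref{eq_ProductManyHilbertSeries}). The summand indexed by $r$ and by $1 \leq i_1 < \cdots < i_r \leq k$ becomes
\[
(S_{(1)}-1)^{r-1}\, Q(T)^r\, H^B(F_n(\mathfrak{N}_{p_{i_1}}), T) \cdots H^B(F_n(\mathfrak{N}_{p_{i_r}}), T).
\]
Summing over the index tuples for fixed $r$ gives exactly $P_r(T)\,Q(T)^r$ with $P_r$ as defined in the statement. Summing over $r$ then yields (ii).

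For (i), apply (\ref{eq_properHilbertSeries}) to $I$ itself: $H^B(F_n(\mathfrak{V}_I), T) = Q(T)^{-1} H(F_n(\mathfrak{V}_I), T)$. Multiplying (ii) by $\prod_i (1-t_i)$ lowers each exponent by one and gives (i).

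The only point needing care is the validity of (\ref{eq_ProductManyHilbertSeries}) itself, which is the iteration of Formanek's formula (\ref{eq_ProductHilbertSeries}). If I needed to justify it, I would use induction on $k$ with $I = (J_1 \cdots J_{k-1}) J_k$. The cleanest way to do this is to note that (\ref{eq_ProductHilbertSeries}) says
\[
1 + (S_{(1)}-1) H_I = \bigl(1+(S_{(1)}-1)H_{J_1}\bigr)\bigl(1+(S_{(1)}-1)H_{J_2}\bigr),
\]
so the quantity $1 + (S_{(1)}-1)H$ is multiplicative over products of T-ideals. Expanding the product of $k$ such factors and dividing by $S_{(1)}-1$ gives (\ref{eq_ProductManyHilbertSeries}) at once. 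This also covers non-distinct factors, since the formula only depends on the factor varieties.

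The hypothesis $k<n$ plays no role in the algebra. It is only inherited from the setting of Corollary \ref{coro_multProduct}, and I do not expect any step to present an obstacle beyond this bookkeeping.
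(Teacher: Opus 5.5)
Your proof is correct and uses the same ingredients as the paper: Formanek's product formula, the relation (\ref{eq_properHilbertSeries}), and finite-dimensionality of $B_n(\mathfrak{N}_p)$, which makes the $P_r$ polynomials. The only difference is order: the paper iterates the proper-series version (\ref{eq_productProperHilbertSeries}) to get (i) first and multiplies by $\prod_{i}(1-t_i)^{-1}$ for (ii), while you go the other way, and your observation that $1+(S_{(1)}-1)H$ is multiplicative neatly justifies the iterated formula (\ref{eq_ProductManyHilbertSeries}), which the paper states without proof.
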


	As we already mentioned above, $T$-ideals $I$ that are products of the form $I = \underbrace{I_2 \cdots I_2}_{k \text { times }}$ and the corresponding cocharacters were studied by Boumova and Drensky in \cite{BD} with explicit results for ``large'' partitions $\lambda$ (see \cite{BD} for the precise definition of ``large''). We notice that when $I = \underbrace{I_2 \cdots I_2}_{k \text { times }}$, i.e., when $p_1 = \dots = p_k = 1$, we have that $H^B(F_n(\mathfrak{N}_{p_{i_r}}), T) = 1$ for all $r= 1, \dots, k$. Therefore, for the polynomials $P_r(T)$ for $r= 1, \dots, k$, using Proposition \ref{prop_MultProductProper}, we obtain the following:
	\begin{align*}
		P_r(T) =&(S_{(1)}(T) -1)^{r-1} \sum_{1 \leq i_1 < \dots < i_r \leq k}H^B(F_n(\mathfrak{N}_{p_{i_1}}), T)\cdots H^B(F_n(\mathfrak{N}_{p_{i_r}}), T) = \\
		&(S_{(1)}(T) -1)^{r-1}\binom{k}{r}.
	\end{align*}
	
	Therefore, Proposition \ref{prop_MultProductProper} leads to the following expression for the Hilbert series of $F_n(\mathfrak{V}_{I})$, which coincides with the results from \cite{BD}.
	\[
	H(F_n(\mathfrak{V}_{I}), T) = \sum_{r= 1}^k \binom{k}{r} \frac{(S_{(1)}(T) -1)^{r-1}}{\prod_{i=1}^n(1-t_i)^r}.
	\]

	\section{Applications to the cocharacters of Lie nilpotent associative algebras} \label{sec_app}
	
	We recall the following well-known result on product of commutators:
	
	\begin{theorem} [\cite{BJ}, \cite{DeK2}]\label{thm_product2}
		For positive integers $m_1$ and $m_2$ such that at least one of them is odd it holds that
		\[
		I_{m_1}I_{m_2} \subset I_{m_1 + m_2 - 1}.
		\]
	\end{theorem}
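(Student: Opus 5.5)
Since this is a result quoted from \cite{BJ}, \cite{DeK2}, I only sketch how I would reconstruct it; I have not checked the details. I would work in the relatively free algebra $F=K\langle X\rangle/I_{m_1+m_2-1}$. There I want to show that every product $u=[a_1,\dots,a_{m_1}][b_1,\dots,b_{m_2}]$ with $a_i,b_j\in K\langle X\rangle$ vanishes. Since $I_{m_1}I_{m_2}$ is generated as a two-sided ideal by such products, this suffices. By multilinearity and the $T$-ideal property I may assume all entries are distinct free generators. Throughout, I use the easy fact that $F$ satisfies every commutator identity of length $\ge m_1+m_2-1$.

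\emph{Step 1 (the weak inclusion).} I would first establish the cruder statement $I_{m_1}I_{m_2}\subseteq I_{m_1+m_2-2}$, which has no parity condition, by induction on $m_2$. The base case is $m_2=2$ with $I_m I_2\subseteq I_m$, which is trivial. For the inductive step, write $[c,b]$ with $c=[b_1,\dots,b_{m_2-1}]$ and use the Leibniz rules
\[
w[c,b]=[wc,b]-[w,b]c,\qquad [w,b]c=[wc,b]-w[c,b],
\]
with $w=[a_1,\dots,a_{m_1}]$. This moves one entry of the second commutator onto the outside of the whole product. The inductive hypothesis applied to $wc$ then raises the commutator length by one.

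\emph{Step 2 (a symmetry of the top-degree term).} This is where the extra $1$ must come from. Modulo $I_{m_1+m_2-1}$, the product $u$ lies in the ``last layer'' $I_{m_1+m_2-2}/I_{m_1+m_2-1}$. I would prove two congruences for $u$ modulo $I_{m_1+m_2-1}$, both obtained from the Leibniz moves of Step 1 together with the Jacobi identity.

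\begin{enumerate}
\item Swapping the two factors changes $u$ only by a commutator of length $\ge m_1+m_2-1$, so $u\equiv[b_1,\dots,b_{m_2}][a_1,\dots,a_{m_1}]$.
\item Transporting all entries of one commutator into the other, one at a time, produces a sign $(-1)^{m_1-1}$ or $(-1)^{m_2-1}$ from the repeated anticommutation $[x,y]=-[y,x]$. Hence $u\equiv(-1)^{m_j-1}u$ for each $j$ for which the move is legal.
\end{enumerate}

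If one of $m_1,m_2$ is odd, then combining the two congruences gives $2u\equiv0$. Since $\mathrm{char}\,K=0$, this forces $u\in I_{m_1+m_2-1}$.

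\emph{Main obstacle.} The hard part is making Step 2 rigorous: tracking exactly which error terms appear when entries are shuttled between the two commutators, and showing that every such term really lies in $I_{m_1+m_2-1}$ rather than only in $I_{m_1+m_2-2}$. I would first try the smallest nontrivial case $I_3I_2\subseteq I_4$ by hand. That case isolates the sign mechanism, since $I_2I_2\not\subseteq I_3$ (as the Grassmann algebra shows) confirms that parity is essential. I would then set up a double induction on $(m_1,m_2)$, carrying along the explicit congruence $u\equiv\pm u'$ as part of the inductive statement.
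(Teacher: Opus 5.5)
The paper gives no proof of this theorem; it quotes it from \cite{BJ}, \cite{DeK2}. Your sketch therefore has to stand on its own, and its key step does not work.

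\textbf{Step 2.} The parity bookkeeping is backwards. The only signs the argument produces are $(-1)^{m_j-1}$, and these equal $+1$ exactly when $m_j$ is odd.
\begin{itemize}
\item When $m_1,m_2$ are both odd (e.g.\ $I_3I_3\subseteq I_5$, which the theorem asserts), you obtain only $u\equiv u$.
\item When both are even, the same mechanism would give $2u\equiv 0$, i.e.\ $I_2I_2\subseteq I_3$, which you yourself note is false.
\item ``Transporting all entries of one commutator into the other'' is not a well-defined move either. Shifting one entry turns lengths $(m_1,m_2)$ into $(m_1+1,m_2-1)$, and you never return to $u$.
\end{itemize}

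The real mechanism already shows in the base case $I_3I_2\subseteq I_4$. Modulo $I_4$, every $3$-commutator with arbitrary entries is central. Apply this to $[x_1,x_2,x_3x_4]$ and to $[x_1x_4,x_2,x_3]$, and commute with $x_5$. This shows that $[x_1,x_2,x_3][x_4,x_5]$ is alternating in $x_3,x_4,x_5$. It also shows that, after multiplying by the sign of the permutation, it does not depend on which two variables occupy the first two slots. The Jacobi identity, a cyclic sum of \emph{three} terms, then gives $3[x_1,x_2,x_3][x_4,x_5]\in I_4$.

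So the number to invert is $3$, not $2$. Over $\mathbb{Z}$, the element $[x_1,x_2,x_3][x_4,x_5]$ is in fact a nonzero element of order $3$ modulo the ideal generated by $4$-commutators (Krasilnikov). Hence no argument built only from the Leibniz rule, anticommutativity and Jacobi (all valid over $\mathbb{Z}$) can produce $2u\equiv 0$ there. The general case needs a genuinely harder induction, which is what \cite{BJ}, \cite{DeK2} supply.

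\textbf{Step 1.} This step also has a gap. In $w[c,b]=[wc,b]-[w,b]c$, the term $[w,b]c$ is covered by induction. For $[wc,b]$, however, you only know $wc\in I_{m_1+m_2-3}$, and $[I_k,K\langle X\rangle]\not\subseteq I_{k+1}$ in general. For example, $[[x_1,x_2]x_3,x_4]\equiv[x_1,x_2][x_3,x_4]\not\equiv 0$ modulo $I_3$, as evaluation in the Grassmann algebra shows.

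Since $[\ell a,b]=[\ell,b]a+\ell[a,b]$, the inclusion $[I_k,K\langle X\rangle]\subseteq I_{k+1}$ is equivalent to $I_kI_2\subseteq I_{k+1}$. That is a special case of the theorem itself, and it already fails for $k=2$. Expanding $[wc,b]$ by Leibniz merely returns $w[c,b]+[w,b]c$, so the two rules you list are circular here.

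The weak inclusion $I_{m_1}I_{m_2}\subseteq I_{m_1+m_2-2}$ is true (a classical theorem of Gupta and Levin), but it needs a real argument. Since Step 2 was meant to add one to the output of Step 1, both steps would have to be replaced.
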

	
	Iterating the above result we obtain the following statement (see also \cite{Da} and \cite{DeK2}). 
	\begin{proposition} For any $k \geq 2$, if at most one of the positive integers $m_1, \dots, m_k$ is even then
		\[
		I_{m_1} \cdots I_{m_k} \subset I_{m_1 + \dots + m_k -k +1}.
		\]
	\end{proposition}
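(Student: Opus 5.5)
We argue by induction on $k$, repeatedly merging the first two factors with Theorem \ref{thm_product2} and checking that the parity hypothesis survives each merge. The base case $k = 2$ is exactly Theorem \ref{thm_product2}. If at most one of $m_1, m_2$ is even, then at least one is odd, and
\[
I_{m_1}I_{m_2} \subset I_{m_1+m_2-1} = I_{m_1+m_2-2+1}.
\]

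For the inductive step, let $k \geq 3$ and assume the statement holds for products of $k-1$ factors. Since at most one of $m_1, \dots, m_k$ is even, at least one of $m_1, m_2$ is odd. Theorem \ref{thm_product2} then gives $I_{m_1}I_{m_2} \subset I_{m_1+m_2-1}$. Products of two-sided ideals are monotone: if $J \subseteq J'$, then $J I_{m_3}\cdots I_{m_k} \subseteq J' I_{m_3}\cdots I_{m_k}$. Hence
\[
I_{m_1} I_{m_2} I_{m_3} \cdots I_{m_k} \subseteq I_{m_1+m_2-1} I_{m_3} \cdots I_{m_k}.
\]

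The key check is that the new sequence $m_1+m_2-1, m_3, \dots, m_k$ again has at most one even entry.
\begin{itemize}
\item If $m_1$ and $m_2$ are both odd, then $m_1+m_2-1$ is odd. The entries $m_3, \dots, m_k$ contain at most one even number, so the hypothesis holds.
\item If exactly one of $m_1, m_2$ is even, then $m_1+m_2-1$ is even. By hypothesis that was the only even number among all the $m_i$, so $m_3, \dots, m_k$ are all odd, and again there is exactly one even entry.
\end{itemize}
Applying the induction hypothesis to these $k-1$ factors gives the inclusion
\[
I_{m_1+m_2-1} I_{m_3}\cdots I_{m_k} \subset I_{N},
\]
where
\[
N = (m_1+m_2-1) + m_3 + \dots + m_k - (k-1) + 1 = m_1 + \dots + m_k - k + 1.
\]
This is the claimed index, which completes the induction.

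The only real point of care is this parity bookkeeping; the index arithmetic and the monotonicity of products of ideals are routine. Note that the argument never reorders factors, which matters because multiplication of ideals is noncommutative. It works because merging always involves the first two factors, and these always contain an odd entry.
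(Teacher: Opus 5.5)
Your proof is correct and takes essentially the same approach as the paper. The paper simply says the statement follows by iterating Theorem \ref{thm_product2}; your induction, which merges the first two factors and checks that the new sequence still has at most one even entry, supplies exactly the bookkeeping that this iteration leaves implicit.
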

	
	Therefore, if a $T$-ideal $I$ is expressed as $I = I_{p_1+1}\cdots I_{p_k+1}$, where at most one of $p_1, \dots, p_k$ is a positive odd integer, we have a natural surjective homomorphism of relatively free algebras
	\[
	f: K\left\langle X_n \right\rangle/I_{p_1+1}\cdots I_{p_k+1} \rightarrow K\left\langle X_n \right\rangle/I_{p_1+ \dots + p_k+1}.
	\]
	and this homomorphism restricts naturally to the respective subalgebras of proper polynomials $B_n(\mathfrak{V}_{I_{p_1+1}\cdots I_{p_k+1}})$ and $B_n(\mathfrak{N}_{p_1 + \dots +p_k})$. The map $f$ is $\GL(n)$-equivariant. By Corollary \ref{coro_multProduct} and Proposition \ref{prop_bound_lambda}, if $m_{\lambda}(I) \neq 0$ for some partition $\lambda = (\lambda_1 \geq \dots \geq \lambda_n)$, then $\lambda_1, \dots, \lambda_k$ can be arbitrarily large, $\lambda_{k+1} \leq p_1 + \dots + p_k -1$, and $\lambda_{2k} \leq p_1 + \dots + p_k -k$. 
	This observation leads to the following proposition, which is an improvement of Theorem 3.3 from \cite{H}.

	\begin{proposition} \label{prop_bound_Ip}
		Consider the relatively free algebra $F_n(\mathfrak{N}_p)$ and let $\lambda$ be a partition such that $m_{\lambda}(I_{p+1}) \neq 0$ in the decomposition
		\[
		F_n(\mathfrak{N}_p) \cong_{\GL(n)} \bigoplus_{\lambda}m_{\lambda}(I_{p+1})V_{\lambda}.	
		\]
		 Then, for each $1 \leq k \leq \lfloor \frac{n}{2} \rfloor $ such that $p > 2(k-1)$ it holds that $\lambda_{2k} \leq p-k$.
	\end{proposition}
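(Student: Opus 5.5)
Fix $k$ with $1 \leq k \leq \lfloor n/2 \rfloor$ and $p > 2(k-1)$. The plan is to realize $F_n(\mathfrak{N}_p)$ as a $\GL(n)$-equivariant quotient of a relatively free algebra $F_n(\mathfrak{V}_J)$, where $J = I_{q_1+1}\cdots I_{q_k+1}$ is a product of $k$ ideals with $q_1+\dots+q_k = p$ and at most one $q_i$ odd. Proposition \ref{prop_bound_lambda} then bounds $\lambda_{2k}$ for the $\lambda$ occurring in $F_n(\mathfrak{V}_J)$, and the bound passes to the quotient.

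\emph{Choice of the $q_i$.} Write $p = 2(k-1) + s$ with $s \geq 1$. Take $q_1 = \dots = q_{k-1} = 2$ and $q_k = s$. Then $q_1 + \dots + q_k = p$, all $q_i \geq 1$, and only $q_k$ can be odd. (This is exactly where the hypothesis $p > 2(k-1)$ is used: it makes $q_k$ a positive integer.) The entries of the product are $m_i = q_i + 1$, so $m_1 = \dots = m_{k-1} = 3$ are odd. Hence at most one of $m_1,\dots,m_k$ is even. The iterated form of Theorem \ref{thm_product2} gives
\[
J = I_{q_1+1}\cdots I_{q_k+1} \subseteq I_{m_1+\dots+m_k-k+1} = I_{p+1}.
\]
We must check the parity condition on the $m_i$ rather than on the $q_i$. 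The paper's remark before the proposition phrases it via the $p_i$, so I will verify directly that the required condition (at most one even $m_i$) holds.

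\emph{Passing to the quotient.} Since $J \subseteq I_{p+1}$, there is a surjective $\GL(n)$-equivariant homomorphism $F_n(\mathfrak{V}_J) \to F_n(\mathfrak{N}_p)$. By complete reducibility of polynomial $\GL(n)$-modules in characteristic zero, $m_\lambda(I_{p+1}) \neq 0$ implies $m_\lambda(J) \neq 0$. We have $2k \leq n$, so $\lambda_{2k}$ is a genuine coordinate of $\lambda$. Proposition \ref{prop_bound_lambda} applied to $J$ gives
\[
\lambda_{2k} \leq q_1+\dots+q_k - k = p-k,
\]
which is the claim.

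\emph{Main obstacle.} The delicate point is the applicability of Proposition \ref{prop_bound_lambda} itself. Its proof uses $\mu_{i_j} \in H(1, q_{i_j}-1)$ for the constituents of $F_n(\mathfrak{N}_{q_i})$, which is the result $s_2 = q-1$ from \cite{H}. It also rests on the combinatorial step that multiplying $r$ hook-shaped Schur functions by $S_{(1)}^{r-1}$ keeps row $2r$ bounded by $\sum q_{i_j} - r$. If that step needs justification, I would argue with Littlewood--Richardson directly. Each factor contributes at most $q_i - 1$ boxes below its first row, so the product of the $r$ hook factors lies in $H(r, \sum q_{i_j} - r)$. A partition in this hook has $\lambda_{r+1} \leq \sum q_{i_j} - r$, and the $r-1$ extra boxes from $S_{(1)}^{r-1}$ can raise row $2r$ only within that bound. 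I would accept this as stated earlier in the paper. The case $k=1$ is just $s_2 = p-1$ and needs nothing new.
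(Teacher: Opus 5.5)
Your proposal is correct and follows the paper's proof: the same splitting $p = 2(k-1) + (p-2(k-1))$, the same inclusion $I_3\cdots I_3\, I_{p-2k+3} \subseteq I_{p+1}$ from the iterated product theorem, and Proposition \ref{prop_bound_lambda} transferred through the $\GL(n)$-equivariant surjection, with $k=1$ handled by $s_2 = p-1$. Your parity worry is unnecessary, though your direct check is harmless: $q_i+1$ is even exactly when $q_i$ is odd, so the paper's phrasing in terms of the $p_i$ states the same condition.
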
 
	
	\begin{proof}
		For $k=1$ the statement follows from Theorem 3.3 from \cite{H}.
		Next, we fix one $k$ such that $2 \leq k \leq \lfloor \frac{n}{2} \rfloor $ and $p > 2(k-1)$. Then, the integer $p$ can be represented in the form 
		\[
		p = \underbrace{2 + \dots + 2}_{k-1} + p - 2(k-1).
		\]
		
		Hence, if we set $p_1 = 2$, $\dots$, $p_{k-1} = 2$ and $p_k = p-2(k-1)$ we have that $p_1 + \dots + p_k = p$. Therefore, $I_{p_1+1} \cdots I_{p_k +1 } \subset I_{p+1}$. Hence, the restriction $\lambda_{2k} \leq p-k$ follows from the observations before the statement.
	\end{proof}
	
	If we take $p=3$ and consider the relatively free algebra $F_n(\mathfrak{N}_3)$ then Proposition \ref{prop_bound_Ip} implies the following: If $m_{\lambda}(I_4) \neq 0$ then $\lambda_2 \leq 2$ and $\lambda_4 \leq 1$. These results agree with the $S_m$-module decomposition of $P_m(\mathfrak{N}_3)$, which can be deduced from the results of Volichenko in \cite{V}. Moreover, both bounds $\lambda_2 \leq 2$ and $\lambda_4 \leq 1$ are obtained, however not simultaneously. This means that if $m_{\lambda}(I_4) \neq 0$ and if $\lambda_2 = 2$ then $\lambda_4 = 0$. And in addition, if $m_{\lambda}(I_4) \neq 0$ with $\lambda_4 = 1$ then necessarily $\lambda_2 =1$. 
	
	Similarly, if we take $p=4$, then for $F_n(\mathfrak{N}_4)$ we obtain that if $m_{\lambda}(I_5) \neq 0$ then $\lambda_2 \leq 3$ and $\lambda_4 \leq 2$. These results again agree with the known $S_m$-module decomposition of $P_m(\mathfrak{N}_4)$, which was obtained by Stoyanova-Venkova in \cite{SV}. However, in this case the bound $\lambda_4 \leq 2$ is not obtained. That is, for all partitions $\lambda$ with $m_{\lambda}(I_5) \neq 0$ we see from \cite{SV} that $\lambda_4 \leq 1$.
	
	We also notice that for $p=5$, i.e., when we consider $F_n(\mathfrak{N}_5)$, then Proposition \ref{prop_bound_Ip} gives already three restrictions on $\lambda$. Namely, if $m_{\lambda}(I_6) \neq 0$ then $\lambda_2 \leq 4$, $\lambda_4 \leq 3$, and $\lambda_6 \leq 2$.
	
	For the conjugate partition $\lambda'$, using similar arguments we obtain the following bounds.
	
	\begin{proposition} \label{prop_bound_conjugate}
		Consider again the relatively free algebra $F_n(\mathfrak{N}_p)$ and let $\lambda$ be a partition such that $m_{\lambda}(I_{p+1}) \neq 0$ in the decomposition
		\[
		F_n(\mathfrak{N}_p) \cong_{\GL(n)} \bigoplus_{\lambda}m_{\lambda}(I_{p+1})V_{\lambda}.	
		\]
		Then, the following hold:
		\begin{itemize}
		\item if $p = 2k$, then $\lambda'_{k+1} \leq 2k-1$;
		\item if $p = 2k+1$, then $\lambda'_{k+1} \leq 2k+1$. 
		\end{itemize}
	\end{proposition}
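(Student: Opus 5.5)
The plan is to copy the proof of Proposition \ref{prop_bound_Ip}, but to split $p$ so that all resulting factors are $I_3$, with at most one $I_2$. Then I will control $\lambda'$ using the $\omega_1$ and $s_1$ parts of Proposition \ref{prop_multProduct} rather than the $\omega_0$ and $s_2$ parts.

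\emph{Case $p=2k$.} Write $p = 2+\dots+2$ with $k$ summands, i.e.\ $p_1=\dots=p_k=2$. All the integers $m_i=p_i+1=3$ are odd, so the iterated version of Theorem \ref{thm_product2} gives
\[
I_3\cdots I_3 \subset I_{3k-k+1}=I_{2k+1}=I_{p+1}.
\]
Hence $F_n(\mathfrak{N}_p)$ is a $\GL(n)$-equivariant homomorphic image of $F_n(\mathfrak{V}_{I_3\cdots I_3})$. Therefore $m_\lambda(I_{p+1})\neq 0$ implies $m_\lambda(I_3\cdots I_3)\neq 0$.

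\emph{Case $p=2k+1$.} Take $p_1=\dots=p_k=2$ and $p_{k+1}=1$. Exactly one of the $m_i$, namely $m_{k+1}=2$, is even, so the same iterated inclusion gives
\[
I_3\cdots I_3 I_2\subset I_{3k+2-(k+1)+1}=I_{2k+2}=I_{p+1}.
\]
As in the first case, $m_\lambda(I_{p+1})\neq0$ implies $m_\lambda(I_3\cdots I_3I_2)\neq0$.

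Next I compute the invariants of the single factors from the known cocharacters, as in the remarks after Theorem \ref{thm_Berele}.

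For $I_3$ (the Grassmann algebra $E$), every $\lambda$ with nonzero multiplicity is a hook $(a,1^b)$, by \cite{H}, where $s_2=p-1=1$. So $\omega_1=1$, and $\lambda'_2\le 1$, which gives $s_1=1$.

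For $I_2$ (the commutative case), only the one-row partitions $\lambda=(m)$ occur. Then $\lambda'_1\le 1$, so $\omega_1=0$ and $s_1=1$.

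Plugging these into Proposition \ref{prop_multProduct}:
\begin{itemize}
\item for $I_3^k$ we get $\omega_1=k$ and $s_1=k\cdot 1+(k-1)=2k-1$;
\item for $I_3^kI_2$ we get $\omega_1=k+0=k$ and $s_1=k+1+k=2k+1$.
\end{itemize}
By the definition of $\omega_1$ and $s_1$, every $\lambda$ with nonzero multiplicity satisfies $\lambda'_{\omega_1+1}\le s_1$. This yields $\lambda'_{k+1}\le 2k-1$ and $\lambda'_{k+1}\le 2k+1$ respectively, which are the two claimed bounds.

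The main point needing care is the $s_1$ bookkeeping in the degenerate factor $I_2$. Here $\omega_1=0$, so the convention must be read as "$\lambda'_1\le s_1$", and I have to check that Proposition \ref{prop_product}(3) still holds in this boundary case. I would verify it directly from Formula (\ref{eq_ProductHilbertSeries}) and the conjugate symmetry of the Littlewood--Richardson rule. Concretely, multiplying by $S_{(1)}$ and by one-row Schur functions adds at most one box to each column. This raises the bound on $\lambda'_{k+1}$ by exactly $1+1$ relative to the $I_3^k$ bound.

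A secondary check is that $n$ is large enough (say $n>k+1$) so that Corollary-type statements with $k<n$ apply. For small $n$ the claimed bounds are weaker than what holds anyway.
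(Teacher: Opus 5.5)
Your argument is correct. In the even case $p=2k$ it is exactly the paper's proof. In the odd case you split $p$ differently.
\begin{itemize}
\item The paper writes $p=2(k-1)+3$ and uses $I_3^{k-1}I_4\subset I_{2k+2}$. It needs Volichenko's explicit description of the cocharacters of $\mathfrak{N}_3$ \cite{V} to get $s_1=3$ for the factor $I_4$.
\item You write $p=2k+1$ and use $I_3^kI_2\subset I_{2k+2}$. The only input beyond the Grassmann algebra is then the trivial cocharacter of commutative algebras, $\omega_1=0$ and $s_1=1$.
\end{itemize}
Both splittings give $\omega_1=k$ and $s_1=2k+1$, hence the same bound.

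Your route is independent of \cite{V}. This matters even for $p=3$, where the paper's argument simply is Volichenko's result, whereas yours derives $\lambda'_2\le 3$ from $I_3I_2\subset I_4$. It also covers the trivial case $p=1$.

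The cost is that you apply Proposition \ref{prop_product}(3) to a factor with $\omega_1=0$, a boundary case you rightly check by hand. Only the upper bound is needed there, and it follows as you indicate from Formula (\ref{eq_ProductHilbertSeries}). Any $\lambda$ with nonzero multiplicity occurs in $H(F_n(\mathfrak{V}_{I_3^k}),T)$, in $H(F_n(\mathfrak{V}_{I_2}),T)$, or in some product $S_{(1)}S_{\mu}S_{(m)}$ with $\mu'_{k+1}\le 2k-1$. The subtracted product term can only cancel, not create, partitions. In the third case the Pieri rule gives $\lambda'_{k+1}\le \mu'_{k+1}+2\le 2k+1$.

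Your worry about the size of $n$ is unnecessary: this upper-bound argument works for every $n$.
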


\begin{proof}
	Consider first the case $p = 2k$. Then $p = p_1 + \dots + p_k$, where $p_1 = \dots = p_k = 2$. Hence we have a surjective homomorphism
	\[
	f: K \left\langle X_n \right\rangle / \underbrace{I_3 \cdots I_3}_{k} \rightarrow K \left\langle X_n \right\rangle /I_{p+1}.
	\]
	
	The cocharacter sequence of the infinite-dimensional Grassmann algebra $E$ ( which has $T(E) = I_3$) is well-known and from there we have that $s_1(E) = 1$. Therefore, Proposition \ref{prop_multProduct} imlies that if $A$ is such that $T(A) = \underbrace{I_3 \cdots I_3}_{k}$, then $s_1(A) = 2k-1$. Therefore, since $\omega_1(A) = k$, the above surjective map $f$ shows that if $\lambda$ is a partition with $m_{\lambda}(I_{p+1}) \neq 0$, then $\lambda'_{k+1} \leq 2k-1$. 
	
	If $p = 2k+1$, then $p = p_1 + \dots + p_{k-1} + p_k$, where $p_1 = \dots = p_{k-1} = 2$ and $p_k = 3$. Hence we have a surjective homomorphism
	\[
	f_1: K \left\langle X_n \right\rangle / \underbrace{I_3 \cdots I_3}_{k-1}I_4 \rightarrow K \left\langle X_n \right\rangle /I_{p+1}.
	\]
	
	By the results of Volichenko \cite{V}, we have that if $B$ is an algebra with $T(B) = I_4$, then $s_1(B) = 3$. Hence, if $A$ is such that $T(A) = \underbrace{I_3 \cdots I_3}_{k-1}I_4$, then $s_1(A) = 2k+1$. Hence, the surjective map $f_1$ shows that if $\lambda$ is a partition with $m_{\lambda}(I_{p+1}) \neq 0$, then $\lambda'_{k+1} \leq 2k+1$. 
\end{proof}

	\section{Applications to Noncommutative invariant theory} \label{sec_invariantTheory}
	
	The importance of the numbers $\omega_0(A)$, $\omega_1(A)$, $s_1(A)$, and $s_2(A)$ can be seen also when considering algebras of invariants. In this section we consider the algebra of $\SL(n)$-invariants $(F_n(\mathfrak{V}_I))^{\SL(n)}$, for an arbitrary $T$-ideal $I$. The following theorem can be deduced from \cite{BBDGK}.
	
	\begin{theorem}\cite{BBDGK} \label{thm_BBDGK_SL}
		Let $R$ be a finitely generated graded algebra over $K$ which is also a $\GL(n)$-module with the following decomposition 
		\[ 
		R = \bigoplus_{i\geq 0} R^i = \bigoplus_{i \geq 0} \bigoplus_{\lambda} m_i(\lambda) V_{\lambda}.
		\]
		Let $G = \SL(n)$. Then the Hilbert series of the algebra of invariants $R^G$ is given by
		\[ 
		H(R^G, t) = \sum_{i \geq 0} (\sum_{\lambda}m_i(\lambda))t^i,
		\]
		where the second sum is over all $\lambda$ with $\lambda_1 = \lambda_2 = \dots = \lambda_n$;
	\end{theorem}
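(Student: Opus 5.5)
The plan is to derive the formula from the classical description of invariants of a reductive group acting on a rational module. We may treat each graded piece separately, since both sides of the claimed identity are computed degree by degree.

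\emph{Step 1 (grading).} Because $G=\SL(n)$ acts by graded algebra automorphisms, the invariant subalgebra splits as $R^G=\bigoplus_i (R^i)^G$. Hence the coefficient of $t^i$ in $H(R^G,t)$ is $\dim (R^i)^G$. Each $R^i$ is finite dimensional, since $R$ is finitely generated and graded. So $R^i$ decomposes as a finite direct sum $\bigoplus_\lambda m_i(\lambda)V_\lambda$ of irreducible polynomial $\GL(n)$-modules; this uses complete reducibility in characteristic zero.

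\emph{Step 2 (additivity).} Taking invariants commutes with finite direct sums, so
\[
\dim (R^i)^G=\sum_\lambda m_i(\lambda)\dim V_\lambda^{\SL(n)}.
\]
It therefore suffices to show that $\dim V_\lambda^{\SL(n)}$ equals $1$ if $\lambda_1=\dots=\lambda_n$ and $0$ otherwise.

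\emph{Step 3 (invariants in an irreducible module).} This is the only substantive step. Restricted to $\SL(n)$, the module $V_\lambda$ stays irreducible, because $\GL(n)=K^*\cdot\SL(n)$ and the scalars act by the constant $c^{|\lambda|}$ (over $\bar K$, with the usual reduction when $K$ is not algebraically closed).

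\begin{itemize}
\item If $V_\lambda^{\SL(n)}\neq 0$, then it is a $\GL(n)$-submodule: $\SL(n)$ is normal and the scalars commute with everything. By irreducibility, $\SL(n)$ then acts trivially on all of $V_\lambda$. Consider the diagonal torus of $\SL(n)$, whose elements have entries with product $1$. The highest weight $t_1^{\lambda_1}\cdots t_n^{\lambda_n}$ must be trivial on it, which forces $\lambda_1=\dots=\lambda_n$.
\item Conversely, if $\lambda=(a,\dots,a)$, then $V_\lambda\cong\det^a$ is one-dimensional and $\SL(n)$-invariant.
\end{itemize}

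In both cases $\dim V_\lambda^{\SL(n)}$ is as claimed, and substituting into Step 2 gives the formula.

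The main point to be careful about is Step 3, specifically that irreducible polynomial $\GL(n)$-modules remain irreducible over $\SL(n)$ and that $\SL(n)$-invariance is detected on the highest weight vector. An alternative route is to use the character directly: $\dim V^{\SL(n)}$ is the multiplicity of the trivial $\SL(n)$-character, and the Schur polynomial $S_\lambda$ restricted to $\prod t_i=1$ is constant only when $\lambda$ is rectangular of height $n$.
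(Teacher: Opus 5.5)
Your proof is correct, and it is the standard argument behind this result. The paper gives no proof of its own and only cites \cite{BBDGK}, where the formula likewise comes from working degree by degree and using that $V_\lambda^{\SL(n)}$ is one-dimensional (namely $V_\lambda\cong\det^{\lambda_1}$) when $\lambda_1=\dots=\lambda_n$ and zero otherwise. In Step 3, normality of $\SL(n)$ together with $\GL(n)$-irreducibility of $V_\lambda$ already suffices, so you can drop both the claim that $V_\lambda$ stays irreducible over $\SL(n)$ and the factorization $\GL(n)=K^*\cdot\SL(n)$, which fails over non-closed $K$.
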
	
	
	When we apply the above theorem to the case $R = F_n(\mathfrak{V}_I)$ for some $T$-ideal $I$, we obtain the following corollary.
	
	\begin{corollary} \label{coro_FinDimSL} Let $A$ be a PI-algebra with $T(A) = I$ and let the integers $\omega_0(A)$, $\omega_1(A)$, $s_1(A)$ and $s_2(A)$ be as before. If $n > \omega_0(A)$, then $(F_n(\mathfrak{V}_I))^{\SL(n)}$ is finite-dimensional. In particular, the Hilbert series $H((F_n(\mathfrak{V}_I))^{\SL(n)}, t)$ is a polynomial and
		\begin{align*}
			\deg H((F_n(\mathfrak{V}_I))^{\SL(n)}, t) \leq ns_2(A).
		\end{align*}
		
		Furthermore, if $n > s_1(A)$ then
		\begin{align*}
			\deg H((F_n(\mathfrak{V}_I))^{\SL(n)}, t) \leq n\omega_1(A).
		\end{align*}
	\end{corollary}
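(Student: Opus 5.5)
By Theorem \ref{thm_BBDGK_SL}, applied to $R = F_n(\mathfrak{V}_I)$ graded by total degree, the Hilbert series of $(F_n(\mathfrak{V}_I))^{\SL(n)}$ counts the multiplicities $m_{\lambda}(I)$ of the partitions of rectangular shape $\lambda = (c, c, \dots, c) = (c^n)$, with $c \geq 0$. Such a $\lambda$ is a partition of $nc$, so it contributes $m_{(c^n)}(I)\,t^{nc}$. It therefore suffices to bound $c$ for every rectangular $\lambda$ with $m_{\lambda}(I) \neq 0$. We also use that each $m_{\lambda}(I)$ is finite, since the homogeneous components of $F_n(\mathfrak{V}_I)$ are finite-dimensional. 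Hence a uniform bound on $c$ makes the Hilbert series a polynomial, and the invariant algebra is finite-dimensional.

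For the first claim, assume $n > \omega_0(A)$, so $n \geq \omega_0(A)+1$. By the definition of $s_2(A)$ (see Figure 2), every $\lambda$ with $m_{\lambda}(A) \neq 0$ satisfies $\lambda_{\omega_0(A)+1} \leq s_2(A)$. For the rectangular partition $\lambda = (c^n)$ the index $\omega_0(A)+1$ is at most $n$, so $\lambda_{\omega_0(A)+1} = c$, which gives $c \leq s_2(A)$. We then invoke the Berele--Drensky identification $m_{\lambda}(I) = m_{\lambda}(A)$ for partitions with at most $n$ parts, which is exactly the situation here. Consequently only degrees $nc \leq n s_2(A)$ can occur, which yields finite-dimensionality and $\deg H \leq n s_2(A)$.

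For the second claim, assume $n > s_1(A)$. Apply the conjugate description: every $\lambda$ with $m_{\lambda}(A)\neq 0$ satisfies $\lambda'_{\omega_1(A)+1} \leq s_1(A)$. For $\lambda = (c^n)$ we have $\lambda' = (n^c)$, that is, $c$ columns each of length $n$. If $c \geq \omega_1(A)+1$, then $\lambda'_{\omega_1(A)+1} = n > s_1(A)$, a contradiction. Hence $c \leq \omega_1(A)$ and the degree is at most $n\omega_1(A)$.

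The main point requiring care is making precise that the definitions of $s_1, s_2$ give these uniform inequalities for \emph{all} nonzero multiplicities (not merely eventually). This follows from the way $s_1(A)$ and $s_2(A)$ are defined as minimal bounds in Figure 2. The case $c=0$, the constants, is trivially included.
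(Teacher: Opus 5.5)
Your proof is correct and is essentially the paper's argument: Theorem~\ref{thm_BBDGK_SL} reduces everything to rectangular $\lambda=(c^n)$. Then $c$ is bounded by $s_2(A)$ via $\lambda_{\omega_0(A)+1}\le s_2(A)$, and by $\omega_1(A)$ via your condition $\lambda'_{\omega_1(A)+1}\le s_1(A)$, which is equivalent to the paper's $\lambda_{s_1(A)+1}\le \omega_1(A)$. Your version even gets the $ns_2(A)$ bound for every $n>\omega_0(A)$ without the paper's case split $\omega_0(A)<n\le s_1(A)$.
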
	
	
	\begin{proof}
		We denote by $(F_n(\mathfrak{V}_I))^{(i)}$ the homogeneous component of total degree $i$ in $F_n(\mathfrak{V}_I)$. Then we can write the decomposition of $F_n(\mathfrak{V}_I)$ as a $\GL(n)$-module in the following way:
		\[
		F_n(\mathfrak{V}_I) = \bigoplus_{i \geq 0} (F_n(\mathfrak{V}_I))^{(i)}\cong_{\GL(n)} \bigoplus_{i \geq 0} \bigoplus_{\lambda} m_{i, \lambda} V_{\lambda},
		\]
		where $|\lambda| = i$ and by the definitions of the numbers $\omega_0$, $\omega_1$, $s_1$ and $s_2$, $\lambda_{\omega_0(A) + 1} \leq s_2(A)$ and $\lambda_{s_1(A) +1} \leq \omega_1(A)$.
		
		Then Theorem \ref{thm_BBDGK_SL} implies that
		\[
		H((F_n(\mathfrak{V}_I))^{\SL(n)}, t) = \sum_{i \geq 0}\left(\sum_{\lambda} m_{i, \lambda}\right)t^i,
		\]
		where the second sum runs over partitions $\lambda$ with $\lambda_1 = \lambda_2 = \dots = \lambda_{n}$ such that $|\lambda| = i$,  $\lambda_{\omega_0(A) + 1} \leq s_2(A)$ and $\lambda_{s_1(A) +1} \leq \omega_1(A)$. Hence, if $\omega_0(A) < n \leq s_1(A)$ the largest partition $\lambda$ which satisfies these conditions is $\lambda_1 = \lambda_2 = \dots = \lambda_{n} = s_2(A)$. Therefore, the highest power of $t$ which can occur in the Hilbert series $H((F_n(\mathfrak{V}_I))^{\SL(n)}, t)$ is $i = ns_2(A)$. Similarly, if $n > s_1(A)$, then the largest partition $\lambda$ which satisfies the necessary conditions is $\lambda_1 = \lambda_2 = \dots = \lambda_{n} = \omega_1(A)$. Thus in this case the highest power of $t$ which can occur in the Hilbert series $H((F_n(\mathfrak{V}_I))^{\SL(n)}, t)$ is $i = n\omega_1(A)$. This completes the proof.
	\end{proof}

	When we apply the above corollary to the case when $I = I_{p_1+1}\cdots I_{p_k+1}$ we obtain the following.
	
	\begin{corollary}
		Let $I = I_{p_1 + 1} \cdots I_{p_k +1}$ for some positive integers $p_1, \dots, p_k$. If $k \leq n-1$, then the algebra of invariants $(F_n(\mathfrak{V}_I))^{\SL(n)}$ is finite-dimensional and
		\[
		\deg H((F_n(\mathfrak{V}_I))^{\SL(n)}, t) \leq n(p_1 + \dots + p_k -k).
		\]
	\end{corollary}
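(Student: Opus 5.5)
The plan is to apply Corollary \ref{coro_FinDimSL} with $A$ any PI-algebra satisfying $T(A) = I = I_{p_1+1}\cdots I_{p_k+1}$, reading off the needed invariants from Corollary \ref{coro_multProduct}. That corollary gives $\omega_0(A) = k$ and $s_2(A) = p_1+\dots+p_k-1$. Since $k \leq n-1$, we have $n > \omega_0(A)$, so the first part of Corollary \ref{coro_FinDimSL} applies directly. It shows that $(F_n(\mathfrak{V}_I))^{\SL(n)}$ is finite-dimensional and that its Hilbert series is a polynomial.

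Used naively, Corollary \ref{coro_FinDimSL} only gives the degree bound $n s_2(A) = n(p_1+\dots+p_k-1)$, which is weaker than the claimed $n(p_1+\dots+p_k-k)$. So I would sharpen the argument by feeding in Proposition \ref{prop_bound_lambda} rather than just $s_2$. Recall from Theorem \ref{thm_BBDGK_SL} that only rectangular partitions $\lambda = (c,c,\dots,c)$ with $n$ equal parts contribute, each in degree $nc$. If $m_\lambda(I)\neq 0$ for such a $\lambda$, I need $c \leq p_1+\dots+p_k-k$.

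I would split into two cases according to the size of $n$ relative to $2k$.

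\emph{Case $n \geq 2k$.} Proposition \ref{prop_bound_lambda} gives $c = \lambda_{2k} \leq p_1+\dots+p_k-k$, which is exactly the bound we want.

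\emph{Case $k < n < 2k$.} Here $\lambda_{2k}$ is not available, and this is the main obstacle. I would return to the proof of Proposition \ref{prop_bound_lambda}. There, $S_\lambda$ occurs in some $(S_{(1)})^{r-1}S_{\mu_{i_1}}\cdots S_{\mu_{i_r}}$ with $r \leq k$ and $\mu_{i_j}\in H(1,p_{i_j}-1)$.
\begin{itemize}
\item By the Littlewood--Richardson rule, the product $S_{\mu_{i_1}}\cdots S_{\mu_{i_r}}$ produces partitions $\tilde\lambda \in H(r, \sum_j (p_{i_j}-1))$, so $\tilde\lambda_{r+1} \leq \sum_j p_{i_j} - r$.
\item Multiplying by $S_{(1)}^{r-1}$ adds $r-1$ boxes in distinct steps, each to some row.
\item Since $n>k\geq r$, row $n$ lies beyond row $r$, so $\tilde\lambda_n \leq \sum_j p_{i_j} - r$.
\end{itemize}
The difficulty is that $\lambda_n$ could gain boxes from the $r-1$ added ones. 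I would counter this with a counting argument: to raise all of rows $r+1,\dots,n$ of a rectangle-forming shape uniformly by one requires $n-r$ boxes. With a careful count, or using $\lambda_{r+1}=\dots=\lambda_n$ together with the hook constraint on $\tilde\lambda$, I expect to get $c \leq \sum_j p_{i_j} - r + \lfloor (r-1)/(n-r)\rfloor$.

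This crude estimate may exceed $\sum_j p_j - k$ when $r<k$. However, the slack $\sum_{j\notin\{i_1,\dots,i_r\}} p_j - (k-r) \geq 0$, since each $p_j \geq 1$, should absorb the extra term. The check I would have to carry out is that $\lfloor (r-1)/(n-r)\rfloor$ is at most this slack, at least when $n>k$. If that fails, I would fall back on proving the corollary under the weaker bound $n s_2(A)$, or on restricting to $n\geq 2k$.
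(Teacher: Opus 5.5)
Your treatment of finite-dimensionality and of the case $n\ge 2k$ is correct. You also rightly observe that Corollary \ref{coro_FinDimSL} on its own, which is all the paper invokes, gives only $n s_2(A)=n(p_1+\dots+p_k-1)$, so Proposition \ref{prop_bound_lambda} is needed for the sharper bound.

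The genuine gap is the case $k<n<2k$. It cannot be filled, because the statement is false there. Your own estimate shows where it breaks, and it is not where you locate it. At $r=k$ the slack $\sum_{j\notin\{i_1,\dots,i_r\}}p_j-(k-r)$ is $0$. Meanwhile $1\le n-k\le k-1$ forces $\lfloor (k-1)/(n-k)\rfloor\ge 1$. So the excess already appears in the main term $r=k$, not only for $r<k$, and nothing absorbs it.

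Carried out in full, your counting proves $\deg H\le n\bigl(p_1+\dots+p_k-k+\lfloor (k-1)/(n-k)\rfloor\bigr)$ for every $n>k$. This uses that $(r-1)/(n-r)$ increases with $r$ and that $\sum_j p_{i_j}-r\le p_1+\dots+p_k-k$. The bound equals the claimed one exactly when $n\ge 2k$.

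This weaker bound is attained, and the claimed one fails. Take $k=2$, $p_1=p_2=1$, so $I=I_2I_2=T(U_2(K))$, and $n=3$, which satisfies $k\le n-1$.
\begin{itemize}
\item Every element of $I_2$ has all homogeneous components of degree at least $2$, so every element of $I_2I_2$ has components of degree at least $4$.
\item Hence the standard polynomial $s_3(x_1,x_2,x_3)$ is nonzero in $F_3(\mathfrak{V}_I)$.
\item Being multilinear and alternating, it satisfies $g\cdot s_3=\det(g)\,s_3$ for $g\in\GL(3)$. So it is a nonzero $\SL(3)$-invariant of degree $3$; equivalently, $m_{(1,1,1)}(I_2I_2)=1$.
\end{itemize}
Here $H=1+t^3$, whereas the claimed bound is $3(1+1-2)=0$. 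More generally, for $I=I_2^k$ and any $k<n\le 2k-1$, the polynomial $s_n$ is a nonzero invariant of degree $n$ against a claimed bound of $0$.

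So the correct statement needs the extra hypothesis $n\ge 2k$; your first case is a complete proof of that version. For $k<n<2k$ only a weaker bound, such as the one above, is available. Your stated fallback is the right resolution, and you should abandon the attempt to cover the range $k<n<2k$ rather than try to complete it.
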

	
	\section*{Acknowledgements}
	I am grateful to Vesselin Drensky for the useful discussions that we had during my work on this paper and for reading the first draft of the paper.

\end{document}